\documentclass[12pt,oneside]{amsart}

\usepackage{amsthm,amsmath,amssymb}
\usepackage{mathrsfs}
\usepackage{enumerate}
\usepackage{amsfonts}
\usepackage{verbatim}
\usepackage{cite}
\usepackage{amsbsy}
\usepackage{amsmath}
\usepackage{amssymb}
\usepackage{MnSymbol}
\usepackage{mathrsfs}

\newtheorem{theorem}{Theorem}[section]
\newtheorem{lemma}[theorem]{Lemma}
\newtheorem{proposition}[theorem]{Proposition}

\newtheorem*{claim}{Claim}
\newtheorem{question}[theorem]{Question}
\newtheorem{corollary}[theorem]{Corollary} 
\theoremstyle{definition}
\newtheorem{definition}[theorem]{Definition}

\theoremstyle{remark}
\newtheorem{remark}[theorem]{Remark}
\newtheorem{example}[theorem]{Example}

\DeclareMathOperator{\supp}{supp}

\DeclareMathOperator{\Sym}{Sym}

\addtolength{\oddsidemargin}{-10mm}
\addtolength{\evensidemargin}{-10mm}
\addtolength{\textwidth}{15mm}

\addtolength{\topmargin}{-7mm}
\addtolength{\textheight}{25mm}

\keywords{Parikh matrices, subword, $M$-equivalence, strong $M$-equivalence}
\subjclass[2000]{68R15, 68Q45, 05A05}

\begin{document}

\title{Parikh Matrices and Strong $M$-Equivalence}
\author{Wen Chean Teh}
\address{School of Mathematical Sciences\\
Universiti Sains Malaysia\\
11800 USM, Malaysia}
\email{dasmenteh@usm.my}

\maketitle

\begin{abstract}
Parikh matrices have been a powerful tool in arithmetizing words by numerical quantities. However, the dependence on the ordering of the alphabet is inherited by Parikh matrices. Strong $M$-equivalence is proposed as a canonical alternative to $M$-equivalence to get rid of this undesirable property. Some characterization of strong $M$-equivalence for a restricted class of words is obtained. Finally, the existential counterpart of strong $M$-equivalence is introduced as well.
\end{abstract}

\section{introduction}
Parikh matrices were introduced in \cite{MSSY01} as an extension of the Parikh vectors \cite{rP66}. 
The definition of Parikh matrices is ingenious, natural, intuitive and amazingly simple. Still, Parikh matrices prove to be a powerful tool in studying (scattered) subword occurrences, for example, see \cite{DS06,MSY04,aS05b,aS06,aS08}.
Nevertheless, due to the limited number of entries in a Parikh matrix, not every word is uniquely determined by its Parikh matrix.
Two words are $M$-equivalent if{f} they have the same Parikh matrix and a word is $M$-unambiguous if{f} it is not $M$-equivalent to another distinct word.
The characterization of $M$-equivalence, as well as, $M$-ambiguity has been the most actively researched problem in this area, for example, see \cite{aA07,AAP08,aA14,AMM02,FR04,aS05a,aS10,vS09,SS06,wT14,wT15a,TK14}. 

Inherent in the definition of Parikh matrices is the dependency on the ordering of the alphabet. Because of this, the words $acb$ and $cab$ are $M$-equivalent with respect to $\{a<b<c\}$ but they are not $M$-equivalent with respect to $\{a<c<b\}$. This undesirable property has led us to propose the notion of strong $M$-equivalence, one that is absolute in the sense that not only dependency on the ordering of the alphabet is avoided, independency from the alphabet is also achieved.  This notion is in fact a natural combinatorial property between words that does not rely on Parikh matrices. 

The remainder of this paper is structured as follows.  Section~2 provides the basic definitions and terminology.
The next section introduces the notion of strong $M$-equivalence and a strictly weaker notion of $M\!S\!E$-equivalence.
However, $M\!S\!E$-equivalence is shown to coincide with strong $M$-equivalence for sufficiently simple ternary words in the subsequent section.
As a side result, relatively simple counterexamples witnessing the fact that $M\!E$-equivalence is strictly weaker than $M$-equivalence are shown to exist,  as opposed  to the mysterious counterexample of length $15$ provided in \cite{vS09}. 
The following section sees the introduction of the opposite notion of ``weakly $M$-related". Our conclusions follow after that, highlighting some future problems.

\section{Subwords and Parikh Matrices}

The cardinality of a set $X$ is denoted by $\vert X\vert$.

Suppose $\Sigma$ is a finite alphabet. The set of words over $\Sigma$ is denoted by $\Sigma^*$. The empty word is denoted by $\lambda$. 
Let $\Sigma^+$ denote the set $\Sigma^*\backslash\{\lambda\}$. If $v,w\in \Sigma^*$, the concatenation of $v$ and $w$ is denoted by $vw$.
An \emph{ordered alphabet} is an alphabet $\Sigma= \{a_1, a_2, \dotsc,a_s\}$ with an ordering on it. For example, if $a_1<a_2<\dotsb < a_s$, then we may write
$\Sigma= \{a_1<a_2< \dotsb<a_s\}$. On the other hand, if $ \Sigma=\{a_1< a_2< \dotsb< a_s\}  $ is an ordered alphabet, then the \emph{underlying alphabet} is  $\{a_1, a_2, \dotsc,a_s\}$.
For $1\leq i\leq j \leq s$, let $a_{i,j}$ denote the word $a_ia_{i+1}\dotsm a_j$.
Frequently, we will abuse notation and use $\Sigma$ to stand for both the ordered alphabet and its underlying alphabet, for example, as in ``$w\in \Sigma^*$", when $\Sigma$ is an ordered alphabet.
If $w\in \Sigma^*$, then $\vert w\vert$ is the length of $w$. 
Suppose $\Gamma\subseteq \Sigma$. The projective morphism $\pi_{\Gamma}\colon \Sigma^*\rightarrow \Gamma^*$ is defined by
$$\pi_{\Gamma}(a)=\begin{cases}
a, & \text{if } a\in \Gamma\\
\lambda, & \text{otherwise.}
\end{cases}$$
We may write $\pi_{a,b}$ for $\pi_{\{a,b\}}$.

\begin{definition}
A word $w'$ is a \emph{subword} of a word $w\in \Sigma^*$ if{f} there exist $x_1,x_2,\dotsc, x_n, y_0, y_1, \dotsc,y_n\in \Sigma^*$, some of them possibly empty, such that
$$w'=x_1x_2\dotsm x_n \text{ and } w=y_0x_1y_1\dotsm y_{n-1}x_ny_n.$$
\end{definition}

In the literature, our subwords are usually called ``scattered subwords". The number of occurrences of a word $u$ as a subword of $w$ is denoted by $\vert w\vert_u$. Two occurrences of $u$ are considered different if{f} they differ by at least one position of some letter. For example, $\vert aabab\vert_{ab}=5$ and $\vert baacbc\vert_{abc}=2$. By convention, $\vert w\vert_{\lambda}=1$ for all $w\in \Sigma^*$. The \emph{support} of $w$, denoted $\supp(w)$, is the set $\{ a\in \Sigma\mid \vert w\vert_a \neq 0   \}$. Note that the support of $w$ is independent of $\Sigma$. The reader is referred to \cite{RS97} for language theoretic notions not detailed here.

For any integer $k\geq 2$, let $\mathcal{M}_k$ denote the multiplicative monoid of $k \times k$ upper triangular matrices with nonnegative integral entries and unit diagonal.

\begin{definition} 
Suppose $\Sigma=\{a_1<a_2< \dotsb<a_s\}$ is an ordered alphabet. The \emph{Parikh matrix mapping}, denoted $\Psi_{\Sigma}$, is the monoid morphism
$$ \Psi_{\Sigma}\colon \Sigma^*\rightarrow \mathcal{M}_{s+1}$$
defined as follows:\\
if $\Psi_{\Sigma}(a_q)=(m_{i,j})_{1\leq i,j\leq s+1}$, then $m_{i,i}=1$ for each $1\leq i\leq s+1$, $m_{q,q+1}=1$ and all other entries of the matrix $\Psi_{\Sigma}(a_q)$ are zero. 
Matrices of the form  $\Psi_{\Sigma}(w)$ for $w\in \Sigma^*$ are called \emph{Parikh matrices}.
\end{definition}


\begin{theorem}\label{1206a}\cite{MSSY01}
Suppose $\Sigma=\{a_1<a_2< \dotsb<a_s\}$ is an ordered alphabet and $w\in \Sigma^*$. The matrix $\Psi_{\Sigma}(w)=(m_{i,j})_{1\leq i,j\leq s+1}$ has the following properties:
\begin{itemize}
\item $m_{i,i}=1$ for each $1\leq i \leq s+1$;
\item $m_{i,j}=0$ for each $1\leq j<i\leq s+1$;
\item $m_{i,j+1}=\vert w \vert_{a_{i,j}}$ for each $1\leq i\leq j \leq s$.
\end{itemize}
\end{theorem}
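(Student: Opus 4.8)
\bigskip

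The plan is to argue by induction on the length of $w$, exploiting that $\Psi_{\Sigma}$ is a monoid morphism and that every word is obtained from $\lambda$ by successively appending single letters. The first two bullet points come essentially for free: each generator $\Psi_{\Sigma}(a_q)$ lies in $\mathcal{M}_{s+1}$ by definition, and $\mathcal{M}_{s+1}$ is a monoid under multiplication, so every $\Psi_{\Sigma}(w)$ is upper triangular with unit diagonal, giving $m_{i,i}=1$ and $m_{i,j}=0$ for $j<i$. All the genuine content therefore resides in the third bullet, the identity $m_{i,j+1}=\vert w\vert_{a_{i,j}}$ for $1\le i\le j\le s$.

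For the base case $w=\lambda$ we have $\Psi_{\Sigma}(\lambda)=I_{s+1}$, so $m_{i,j+1}=0$ for all $i\le j$, which matches $\vert\lambda\vert_{a_{i,j}}=0$ since $a_{i,j}$ is a nonempty word. For the inductive step I would write the word as $wa_q$ and use $\Psi_{\Sigma}(wa_q)=\Psi_{\Sigma}(w)\Psi_{\Sigma}(a_q)$. The key observation is that $\Psi_{\Sigma}(a_q)$ is the identity matrix with a single additional $1$ in position $(q,q+1)$, so right multiplication by it leaves $\Psi_{\Sigma}(w)$ unchanged except that column $q$ is added into column $q+1$. Thus, writing $\Psi_{\Sigma}(w)=(m_{i,j})$ and $\Psi_{\Sigma}(wa_q)=(m'_{i,j})$, one gets $m'_{i,j}=m_{i,j}$ whenever $j\ne q+1$, and $m'_{i,q+1}=m_{i,q+1}+m_{i,q}$.

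It then remains to match this matrix recursion against the combinatorial recursion for subword occurrences, by classifying the occurrences of $a_{i,j}$ in $wa_q$ according to whether they use the appended final letter. If $j\ne q$, the last letter $a_j$ of $a_{i,j}$ differs from $a_q$, so no occurrence can end at the appended position, whence $\vert wa_q\vert_{a_{i,j}}=\vert w\vert_{a_{i,j}}$, in agreement with $m'_{i,j+1}=m_{i,j+1}$. If $j=q$, the occurrences of $a_{i,q}$ split into those lying entirely within $w$, numbering $\vert w\vert_{a_{i,q}}$, and those matching the terminal $a_q$ to the appended letter while embedding the prefix $a_{i,q-1}$ into $w$, numbering $\vert w\vert_{a_{i,q-1}}$. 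Invoking the induction hypothesis, $\vert w\vert_{a_{i,q}}=m_{i,q+1}$ and $\vert w\vert_{a_{i,q-1}}=m_{i,q}$, so $\vert wa_q\vert_{a_{i,q}}=m_{i,q+1}+m_{i,q}=m'_{i,q+1}$, as desired.

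I expect the only subtle point to be the boundary case $i=q$ in the last step, where the prefix $a_{i,q-1}$ degenerates to the empty word. There one reads the count of that prefix in $w$ as $\vert w\vert_{\lambda}=1$, which is exactly the diagonal entry $m_{q,q}=1$ supplied by the already established first bullet; with that convention the argument goes through uniformly. Everything else is routine bookkeeping about matrix products and counting embeddings.
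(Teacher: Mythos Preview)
Your argument is correct. Note, however, that the paper does not supply its own proof of this theorem: it is quoted as a known result with the citation \cite{MSSY01} and left unproved. Your induction on $\lvert w\rvert$, using that $\Psi_{\Sigma}$ is a morphism and that right multiplication by $\Psi_{\Sigma}(a_q)$ adds column $q$ into column $q+1$, is the standard proof of this fact and matches the original argument in the cited reference; the handling of the boundary case $i=q$ via the convention $\lvert w\rvert_{\lambda}=1=m_{q,q}$ is exactly right.
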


The \emph{Parikh vector} $\Psi(w)=(\vert w\vert_{a_1}, \vert w\vert_{a_2}, \dotsc, \vert w\vert_{a_s})$ of a word $w\in \Sigma^*$ is embedded in the second diagonal of the Parikh matrix $\Psi_{\Sigma}(w)$.

\begin{example}
Suppose $\Sigma=\{a<b<c\}$ and $w=babcc$.
Then \begin{align*}
\Psi_{\Sigma}(w)&=\Psi_{\Sigma}(b)\Psi_{\Sigma}(a)\Psi_{\Sigma}(b)\Psi_{\Sigma}(c)\Psi_{\Sigma}(c)\\
&= \begin{pmatrix}
1 & 0& 0&0 \\
0 &1 & 1 & 0\\
0 & 0 & 1 & 0\\
0 & 0 & 0 & 1
\end{pmatrix}
\begin{pmatrix}
1 & 1 & 0&0 \\
0 &1 & 0 & 0\\
0 & 0 & 1 & 0\\
0 & 0 & 0 & 1
\end{pmatrix}\dotsm
\begin{pmatrix}
1 & 0 & 0&0 \\
0 &1 & 0 & 0\\
0 & 0 & 1 & 1\\
0 & 0 & 0 & 1
\end{pmatrix}\\
&= \begin{pmatrix}
1 & 1& 1& 2 \\
0 &1 & 2 & 4\\
0 & 0 & 1 & 2\\
0 & 0 & 0 & 1
\end{pmatrix}
=\begin{pmatrix}
1 & \vert w\vert_a &\vert w\vert_{ab} &\vert w\vert_{abc} \\
0 &1 & \vert w\vert_b & \vert w\vert_{bc}\\
0 & 0 & 1 & \vert w\vert_c\\
0 & 0 & 0 & 1
\end{pmatrix}.
\end{align*}
\end{example}


\begin{definition}
Suppose $\Sigma=\{a_1<a_2< \dotsb<a_s\}$  is an ordered alphabet.
\begin{enumerate}
\item Two words $w,w'\in \Sigma^*$ are \emph{$M$-equivalent}, denoted $w\equiv_M w'$, if{f} $\Psi_{\Sigma}(w)=\Psi_{\Sigma}(w')$.
\item A word $w\in \Sigma^*$ is \emph{$M$-unambiguous} if{f} no distinct word is $M$-equivalent to $w$. Otherwise, $w$ is said to be \emph{$M$-ambiguous}.
\end{enumerate}
\end{definition}

Note that the notion of $M$-equivalence, as well as $M$-ambiguity, depends on the ordered alphabet $\Sigma$.
However, for the various relations that we will encounter in this article, the reference to the respective ordered/unordered alphabet often will be suppressed, assuming it is understood from the context.

The following are two elementary rules for deciding whether two words are $M$-equivalent.
Suppose $\Sigma=\{a_1<a_2< \dotsb<a_s\}$ and $w,w'\in \Sigma^*$. 
\begin{itemize}
\item[$E1$.] If $w=xa_ka_ly$ and $w'=xa_la_k y$ for some $x, y\in \Sigma^*$ and $\vert k-l\vert \geq 2$, 
then $w\equiv_M w'$.
\item[$E2$.] If $w=xa_ka_{k+1}ya_{k+1}a_kz$ and $w'=xa_{k+1}a_kya_ka_{k+1}z$ for some $1\leq k\leq s-1$, $x, z\in \Sigma^*$, and  $y\in (\Sigma\backslash \{a_{k-1},a_{k+2}\})^*$, 
then $w\equiv_M w'$.
\end{itemize}

Rule $E1$ is obviously valid by Theorem~\ref{1206a}. In other words, the Parikh matrix of a word $w$ is not sensitive to the mutual ordering of any two consecutive distinct letters in $w$ that are not consecutive in the ordered alphabet.
Meanwhile, Rule $E2$ is sufficient to characterize $M$-equivalence for the binary alphabet.

\begin{theorem}\cite{aA07,FR04}\label{2605c}
Suppose $\Sigma$ is a binary ordered alphabet and $w,w'\in \Sigma^*$. Then $w$ and $w'$ are $M$-equivalent if and only if $w'$ can be obtained from $w$ by finitely many applications of Rule $E2$ (more precisely, the rewriting rule implicitly stated in Rule~$E2$).
\end{theorem}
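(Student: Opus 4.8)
The plan is to fix, without loss of generality, the binary ordered alphabet $\Sigma=\{a<b\}$ and to exploit the fact that, by Theorem~\ref{1206a}, two words $w,w'\in\Sigma^*$ are $M$-equivalent if and only if $\vert w\vert_a=\vert w'\vert_a$, $\vert w\vert_b=\vert w'\vert_b$ and $\vert w\vert_{ab}=\vert w'\vert_{ab}$. The ``if'' direction is the routine one: I would check directly that a single application of Rule~$E2$, which in the binary case rewrites $x\,ab\,y\,ba\,z$ into $x\,ba\,y\,ab\,z$ for arbitrary $x,y,z\in\Sigma^*$ (the restriction on $y$ being vacuous in the binary case), preserves all three quantities. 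The counts $\vert w\vert_a$ and $\vert w\vert_b$ are obviously unchanged, and for $\vert w\vert_{ab}$ one observes that the two rewrites alter only the relative order of letters inside the two affected length-two windows: flipping $ab$ into $ba$ contributes $-1$ while flipping $ba$ into $ab$ contributes $+1$, whereas every pair of letters straddling the two windows keeps its relative order. The two contributions cancel, and so the equivalence generated by $E2$ is contained in $\equiv_M$.

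For the converse I would introduce a combinatorial model for the fibres of the Parikh matrix. Fixing $p=\vert w\vert_a$ and $q=\vert w\vert_b$, I encode a word by the sequence $d_1\le d_2\le\dotsb\le d_p$, where $d_j$ is the number of occurrences of $b$ to the left of the $j$-th occurrence of $a$. This assignment is a bijection between the words having exactly $p$ letters $a$ and $q$ letters $b$ and the weakly increasing integer sequences with $0\le d_j\le q$; equivalently, such a sequence is a lattice path, or Young diagram, inside the $p\times q$ box. Since the $j$-th occurrence of $a$ contributes $q-d_j$ to $\vert w\vert_{ab}$, one has $\vert w\vert_{ab}=pq-\sum_j d_j$, so an entire $M$-equivalence class corresponds precisely to the set of sequences of a fixed sum inside the box. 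The goal thus becomes to show that $E2$ acts transitively on each such set.

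Next I would translate Rule~$E2$ into a move on these sequences. Tracking the two occurrences of $a$ that are displaced, the leftmost one (immediately followed by $b$) gains one $b$ to its left while the rightmost one (immediately preceded by $b$) loses one, so $E2$ increases one $d$-value by $1$ and decreases another by $1$: this is a sum-preserving transfer of a single cell between two rows of the Young diagram. Because $E2$ is symmetric, such a cell transfer is available in both directions. I would then prove transitivity by fixing the (unique) \emph{flattest} representative of each fixed-sum class and showing that whenever a word differs from its canonical form, a suitable application of $E2$ strictly decreases a monovariant such as $\sum_j j\,d_j$, which is minimized exactly at the flattest sequence; finitely many applications of $E2$ then carry both $w$ and $w'$ to the same canonical word.

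The hard part will be the last step: matching the purely numerical cell transfers to genuine instances of Rule~$E2$. A transfer between two arbitrary rows need not, in itself, present an adjacent $ab$ lying to the left of an adjacent $ba$, so the main obstacle is to verify that whenever the monovariant can be decreased, the requisite corners are actually present in the word, possibly after a few preparatory $E2$ moves that rearrange letters without changing the Parikh matrix. I would isolate this as a lemma locating, in any non-canonical word, an adjacent $ab$ to the left of an adjacent $ba$ whose simultaneous flip effects the desired transfer; the generous freedom in $E2$ afforded by the vacuous condition on $y$ in the binary case is exactly what makes such a configuration findable.
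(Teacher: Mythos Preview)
The paper does not supply its own proof of this theorem: it is quoted as a known result with citations to \cite{aA07,FR04}, so there is no in-paper argument to compare your proposal against.

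That said, your plan is sound and would go through. One remark: the step you flag as ``the hard part'' is in fact a clean bijection rather than an obstacle. Writing the word with $d$-sequence $(d_1,\dots,d_p)$ explicitly as
\[
b^{d_1}\,a\,b^{d_2-d_1}\,a\,b^{d_3-d_2}\,a\,\dotsm\,a\,b^{q-d_p},
\]
one sees that the $i$-th $a$ is immediately followed by a $b$ exactly when $d_{i+1}>d_i$ (or $i=p$ and $d_p<q$), and the $j$-th $a$ is immediately preceded by a $b$ exactly when $d_j>d_{j-1}$ (or $j=1$ and $d_1>0$). For $i<j$ these two conditions, together with the non-overlap requirement (which forces $d_{i+1}-d_i\ge 2$ when $j=i+1$), are \emph{precisely} the conditions guaranteeing that the transfer $d_i\mapsto d_i+1$, $d_j\mapsto d_j-1$ keeps the sequence weakly increasing. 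Hence every admissible cell transfer is literally a single application of Rule~$E2$; no preparatory moves are needed. With this in hand, your monovariant argument finishes the proof: if $d_p-d_1\ge 2$, take $i$ maximal with $d_i=d_1$ and $j$ minimal with $d_j=d_p$ to obtain an admissible transfer lowering $\sum_k k\,d_k$, and iterate down to the unique sequence with $d_p-d_1\le 1$.
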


\begin{example}
Applying Rule $E2$, $w=\boldsymbol{ba}a\boldsymbol{ab}bba \rightarrow ab\boldsymbol{ab}ab\boldsymbol{ba} \rightarrow abbaabab=w'$. Hence,  $w$ and $w'$ are $M$-equivalent with respect to $\{a<b\}$. 
\end{example}

\begin{definition}\label{1906a}
Suppose  $\Sigma$ is an ordered alphabet and $w,w'\in \Sigma^*$. 
We say that $w$ and $w'$ are \emph{$1$-equivalent}, denoted $w\equiv_1 w'$, if{f} $w'$ can be obtained from $w$ by finitely many applications of Rule $E1$. 
We say that $w$ and $w'$ are \emph{elementarily matrix equivalent} (\emph{$M\!E$-equivalent}), denoted $w\equiv_{M\!E} w'$, if{f} $w'$ can be obtained from $w$ by finitely many applications of Rule $E1$ and Rule $E2$. 
\end{definition}

The term $M\!E$-equivalence is due to Salomaa \cite{aS10}. Historically, it was claimed in \cite{AAP08} that $M\!E$-equivalence characterizes \mbox{$M$-equivalence}  for any alphabet.
However, it was overturned in \cite{vS09} through the counterexample $babcbabcbabcbab$ and $bbacabbcabbcbba$. The two words are $M$-equivalent with respect to $\{a<b<c\}$ but neither Rule $E1$ nor Rule $E2$ can be applied to the former word.

Finally, a simple lemma relating Parikh matrices to morphisms induced by permutations on the ordered alphabet is needed.
Suppose $\Sigma=\{a_1<a_2<\dotsb<a_s\}$ is an ordered alphabet.
Let $\Sym (s)$ denote the symmetric group of order $s$ and $\sigma \in \Sym(s)$. Then $\sigma$ induces a morphism $\sigma'$ from $\Sigma^*$ onto $\Sigma^*$ defined by
$\sigma'(a_i)=a_{\sigma(i)}$ for $1\leq i\leq s$. For simplicity, we may identify $\sigma'$ with $\sigma$ and write $\sigma w$ for $\sigma'(w)$.
Let $\sigma\Sigma$ denote the ordered alphabet $\{a_{\sigma^{-1}(1)}<a_{\sigma^{-1}(2)}  <\dotsb< a_{\sigma^{-1}(s)}   \}$. 

\begin{lemma}\label{2405c}
Suppose $\Sigma=\{a_1<a_2<\dotsb<a_s\}$ and $\sigma \in \Sym(s)$.
Then $\Psi_{\Sigma}(\sigma w)=\Psi_{\sigma\Sigma} (w)$ for all $w\in \Sigma^*$.
\end{lemma}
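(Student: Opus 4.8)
The plan is to exploit the fact that all three maps involved are monoid morphisms out of the free monoid $\Sigma^*$. Indeed, $\sigma$ (identified with the induced $\sigma'$) is a morphism $\Sigma^* \to \Sigma^*$, and both $\Psi_{\Sigma}$ and $\Psi_{\sigma\Sigma}$ are monoid morphisms into $\mathcal{M}_{s+1}$; hence the composite $\Psi_{\Sigma}\circ\sigma$ is again a monoid morphism $\Sigma^*\to\mathcal{M}_{s+1}$. Since $\Sigma^*$ is free over $\Sigma$, two morphisms out of it coincide as soon as they agree on every generator. So I would reduce the entire claim to verifying $\Psi_{\Sigma}(\sigma a_i)=\Psi_{\sigma\Sigma}(a_i)$ for each $1\leq i\leq s$, after which the general statement $\Psi_{\Sigma}(\sigma w)=\Psi_{\sigma\Sigma}(w)$ follows by multiplicativity applied to any factorization $w=a_{i_1}a_{i_2}\dotsm a_{i_n}$.

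For the generator computation, I would first unwind the left-hand side: $\sigma a_i = a_{\sigma(i)}$, so $\Psi_{\Sigma}(\sigma a_i)=\Psi_{\Sigma}(a_{\sigma(i)})$ is, by the definition of the Parikh matrix mapping, the identity matrix with a single extra $1$ placed in entry $(\sigma(i),\sigma(i)+1)$. For the right-hand side, the crucial step is to locate the rank of $a_i$ within the reordered alphabet $\sigma\Sigma=\{a_{\sigma^{-1}(1)}<\dotsb<a_{\sigma^{-1}(s)}\}$. Writing the $k$-th smallest letter of $\sigma\Sigma$ as $a_{\sigma^{-1}(k)}$, one sees that $a_i$ occupies position $k$ exactly when $\sigma^{-1}(k)=i$, that is, when $k=\sigma(i)$. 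Hence $a_i$ is the $\sigma(i)$-th letter of $\sigma\Sigma$, so by the same definition $\Psi_{\sigma\Sigma}(a_i)$ is the identity matrix with its single extra $1$ in entry $(\sigma(i),\sigma(i)+1)$. The two matrices coincide, which completes the generator check and therefore the lemma.

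The only point requiring genuine care — and the place where an index slip would most easily creep in — is the bookkeeping of the previous paragraph: one must track that $\sigma$ acts on letters via $a_i\mapsto a_{\sigma(i)}$ while the reordering defining $\sigma\Sigma$ is phrased through $\sigma^{-1}$, and confirm that these two conventions combine so that the position of $a_i$ in $\sigma\Sigma$ is $\sigma(i)$ and not $\sigma^{-1}(i)$. Everything else is the formal and routine observation that a morphism out of a free monoid is determined by its values on the generators.
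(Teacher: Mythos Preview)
Your proof is correct and follows essentially the same approach as the paper: reduce to generators by the morphism property, then verify $\Psi_{\Sigma}(\sigma a_q)=\Psi_{\sigma\Sigma}(a_q)$ using $\sigma a_q=a_{\sigma(q)}$ and the fact that $a_q$ sits in position $\sigma(q)$ of $\sigma\Sigma$ (equivalently, $a_q=a_{\sigma^{-1}(\sigma(q))}$). Your write-up is simply more explicit about the index bookkeeping.
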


\begin{proof}
Since $\Psi_{\Sigma}$, $\Psi_{\sigma\Sigma}$ and $\sigma$ are morphisms, the lemma holds since for $1\leq q\leq s$,
$\Psi_{\Sigma}(\sigma a_q)=\Psi_{\sigma\Sigma} (a_q)$, which follows because $\sigma a_q=a_{\sigma (q)}$ and $a_q=a_{\sigma^{-1}(\sigma (q))}$.
\end{proof}

\section{Strong $M$-equivalence}\label{0206d}

The core object of this study will now be formally introduced.  As a reminder, unless explicitly stated, an alphabet does not come with an ordering on it. 

\begin{definition}
Suppose $\Sigma$ is an alphabet.
Two words $w, w'\in \Sigma^*$ are \emph{strongly $M$-equivalent}, denoted $w \overset{s}{\equiv}_M w'$, if{f} $w$ and $w'$ are $M$-equivalent  with respect to any ordered alphabet with underlying alphabet $\Sigma$.
\end{definition}


\begin{proposition}
Suppose $\Sigma$ is an alphabet. The relation $\overset{s}{\equiv}_M$ is an equivalence relation on $\Sigma^*$ that is left invariant  (respectively right invariant), meaning
$w \overset{s}{\equiv}_M w'$ if and only if $vw \overset{s}{\equiv}_M vw'$ (respectively $wv \overset{s}{\equiv}_M w'v$) for all $w,w',v\in \Sigma^*$. 
\end{proposition}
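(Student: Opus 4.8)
The plan is to deduce everything from the structural properties of the Parikh matrix morphisms, treating one ordered alphabet at a time and then intersecting over all orderings.

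First, for the equivalence relation claim, observe that strong $M$-equivalence is precisely the intersection, over all ordered alphabets with the given underlying alphabet $\Sigma$, of the relations $\equiv_M$. Each such $\equiv_M$ is an equivalence relation, being the kernel pair of the map $\Psi_{\Sigma}$, and an arbitrary intersection of equivalence relations on a fixed set is again an equivalence relation. Hence reflexivity, symmetry, and transitivity of $\overset{s}{\equiv}_M$ follow at once; if a self-contained argument is preferred, one simply checks the three conditions directly, each reducing to the corresponding property of matrix equality holding for every ordering.

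For invariance, I would first establish the analogous statement for a single fixed ordered alphabet $\Sigma=\{a_1<\dotsb<a_s\}$, namely that $w\equiv_M w'$ if and only if $vw\equiv_M vw'$. The forward direction is immediate from the fact that $\Psi_{\Sigma}$ is a monoid morphism: if $\Psi_{\Sigma}(w)=\Psi_{\Sigma}(w')$, then $\Psi_{\Sigma}(vw)=\Psi_{\Sigma}(v)\Psi_{\Sigma}(w)=\Psi_{\Sigma}(v)\Psi_{\Sigma}(w')=\Psi_{\Sigma}(vw')$. For the converse, which I expect to be the only real content, I would use that every element of $\mathcal{M}_{s+1}$ is invertible over $\mathbb{Q}$, being upper triangular with unit diagonal and hence of determinant $1$. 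Consequently $\Psi_{\Sigma}(v)$ is left-cancellable: from $\Psi_{\Sigma}(v)\Psi_{\Sigma}(w)=\Psi_{\Sigma}(v)\Psi_{\Sigma}(w')$, left multiplication by $\Psi_{\Sigma}(v)^{-1}$ yields $\Psi_{\Sigma}(w)=\Psi_{\Sigma}(w')$. The right-invariant version is entirely symmetric, using $\Psi_{\Sigma}(wv)=\Psi_{\Sigma}(w)\Psi_{\Sigma}(v)$ together with right-cancellation.

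Finally, I would lift this to strong $M$-equivalence by quantifying over orderings. Since $w\overset{s}{\equiv}_M w'$ means $w\equiv_M w'$ with respect to every ordering of $\Sigma$, and likewise $vw\overset{s}{\equiv}_M vw'$ means $vw\equiv_M vw'$ with respect to every ordering, the single-alphabet biconditional $w\equiv_M w'\Leftrightarrow vw\equiv_M vw'$ holds ordering by ordering. Taking the conjunction over all orderings then gives $w\overset{s}{\equiv}_M w'\Leftrightarrow vw\overset{s}{\equiv}_M vw'$, and symmetrically on the right. The only subtle point is the invertibility of Parikh matrices, which legitimizes cancellation in both directions; everything else amounts to the morphism property together with a routine interchange of the biconditional with the universal quantifier over orderings.
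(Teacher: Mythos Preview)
Your proposal is correct and follows essentially the same approach as the paper, which simply records that the result holds because $M$-equivalence already enjoys these properties; you have merely supplied the details (intersection of equivalence relations, the morphism property of $\Psi_{\Sigma}$, and cancellability via invertibility of unit upper-triangular matrices) that the paper leaves implicit.
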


\begin{proof}
This follows because $M$-equivalence has such properties.
\end{proof}

Generally, strong $M$-equivalence is strictly stronger than $M$-equivalence. However, if $w$ and $w'$ are $M$-equivalent with respect to $\{a<b\}$, then 
$w$ and $w'$ are $M$-equivalent with respect to $\{b<a\}$ as well by Theorem~\ref{1206a} and the identity $\vert v\vert_{ab}+\vert v \vert_{ba}= \vert v\vert_a\vert v \vert_b$.
Hence, for the binary alphabet, strong $M$-equivalence is nothing more than $M$-equivalence.

\begin{theorem}\label{2701b}
Suppose $\Sigma$ is an alphabet and $w, w'\in \Sigma^*$. Then $w$ and $w'$ are strongly $M$-equivalent if and only if for every $v\in \Sigma^*$ such that
$\vert v\vert_a \leq 1$ for all $a\in \Sigma$, we have $\vert w\vert_v=\vert w'\vert_v$.
\end{theorem}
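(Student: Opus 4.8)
The plan is to reduce strong $M$-equivalence, which quantifies over all orderings of $\Sigma$, to a finite bookkeeping statement via Theorem~\ref{1206a}. The crucial observation is that a word $v\in\Sigma^*$ satisfies $\vert v\vert_a\leq 1$ for all $a\in\Sigma$ precisely when $v$ is a word of pairwise distinct letters, and that these are exactly the words that arise as the consecutive factors $a_{i,j}=a_ia_{i+1}\dotsm a_j$ recorded by a Parikh matrix, as the ordering of $\Sigma$ ranges over all its orderings. So the whole theorem amounts to saying that the superdiagonal entries of $\Psi_\Sigma(w)$, collected over every ordering $\Sigma$, are exactly the counts $\vert w\vert_v$ for distinct-letter $v$.

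First I would dispatch the ($\Leftarrow$) direction. Fix an arbitrary ordering $\Sigma=\{a_1<\dotsb<a_s\}$ of the underlying alphabet. By Theorem~\ref{1206a}, each entry of $\Psi_\Sigma(w)$ strictly above the diagonal equals $\vert w\vert_{a_{i,j}}$ for some $1\leq i\leq j\leq s$, and every such $a_{i,j}$ is a word with pairwise distinct letters, hence satisfies $\vert a_{i,j}\vert_a\leq 1$ for all $a$. The hypothesis therefore forces $\Psi_\Sigma(w)=\Psi_\Sigma(w')$ entry by entry; since the ordering was arbitrary, $w\overset{s}{\equiv}_M w'$.

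For the ($\Rightarrow$) direction, I would argue by realizing each relevant $v$ as a consecutive factor. Given a distinct-letter word $v=b_1b_2\dotsm b_k$, choose the ordering of $\Sigma$ in which $b_1<b_2<\dotsb<b_k$ come first and the remaining letters follow in any order. In this ordering $v=a_{1,k}$, so $\vert w\vert_v$ is exactly the $(1,k+1)$ entry of $\Psi_\Sigma(w)$ by Theorem~\ref{1206a}. Strong $M$-equivalence gives $\Psi_\Sigma(w)=\Psi_\Sigma(w')$ for this ordering, whence $\vert w\vert_v=\vert w'\vert_v$. Words $v$ using a letter outside $\supp(w)\cup\supp(w')$ contribute $0=0$ and need no separate treatment, and the cases $v=\lambda$ and $v$ a single letter are covered by the convention $\vert w\vert_\lambda=1$ and by matching Parikh vectors.

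The only genuine content is the realizability claim in the forward direction, namely that every distinct-letter word occurs as some $a_{i,j}$ under a suitable ordering, together with correctly identifying which matrix entry records $\vert w\vert_v$. Both are elementary, so I anticipate no serious obstacle; the proof is essentially a careful accounting of which subword counts a Parikh matrix stores, assembled across all orderings of $\Sigma$.
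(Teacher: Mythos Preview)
Your argument is correct and is exactly the unpacking of the paper's one-line proof (``immediate from Theorem~\ref{1206a} and the definition''): both directions rest on the observation that the superdiagonal entries of a Parikh matrix are precisely the counts $\vert w\vert_{a_{i,j}}$, and that every distinct-letter word arises as some $a_{i,j}$ for a suitable ordering. There is nothing to add.
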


\begin{proof}
This is immediate from Theorem~\ref{1206a} and the definition.
\end{proof}



Theorem~\ref{2701b} should put to rest any doubt on the significance of strong \mbox{$M$-equivalence.} It shows that strong $M$-equivalence is  indeed a very natural and symmetrical combinatorial property between words. Futhermore, the property does not invoke Parikh matrices and thus need not factor in the ordering of the alphabet. In fact, this could have taken to be the defining property of strong $M$-equivalence. 

Furthermore, unlike $M$-equivalence, strong $M$-equivalence is \emph{absolute} in the sense that it is independent from the alphabet, as provided by the next proposition.
Hence, we can simply say that two words are strongly $M$-equivalent without implicitly/explicitly referring to any specific alphabet.

\begin{proposition}\label{1605c}
Suppose $\Sigma$ and $\Gamma$ are alphabets and $w,w'\in \left(\Sigma\cap \Gamma\right)^*$. Then $w$ and $w'$ are strongly $M$-equivalent with respect to $\Sigma$ if and only if they are strongly $M$-equivalent with respect to $\Gamma$. 
\end{proposition}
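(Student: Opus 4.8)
The plan is to reduce the claim to the intrinsic characterization of strong $M$-equivalence furnished by Theorem~\ref{2701b}, which manifestly makes no reference to any ambient alphabet. First I would observe that since $w,w'\in(\Sigma\cap\Gamma)^*$, every letter actually occurring in $w$ or $w'$ lies in $\supp(w)\cup\supp(w')\subseteq\Sigma\cap\Gamma$. The key point is that the condition ``$\vert w\vert_v=\vert w'\vert_v$ for every $v$ with $\vert v\vert_a\le 1$ for all letters $a$'' depends only on these subword counts, and a subword $v$ can occur in $w$ (or $w'$) only if $\supp(v)\subseteq\supp(w)$ (respectively $\supp(w')$); any test word $v$ using a letter outside $\Sigma\cap\Gamma$ contributes $\vert w\vert_v=0=\vert w'\vert_v$ trivially and so imposes no constraint.

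The central step is therefore to apply Theorem~\ref{2701b} twice, once for each alphabet. By that theorem, $w$ and $w'$ are strongly $M$-equivalent with respect to $\Sigma$ if and only if $\vert w\vert_v=\vert w'\vert_v$ for every $v\in\Sigma^*$ with $\vert v\vert_a\le 1$ for all $a\in\Sigma$; likewise, they are strongly $M$-equivalent with respect to $\Gamma$ if and only if the same equalities hold for every $v\in\Gamma^*$ with $\vert v\vert_a\le 1$ for all $a\in\Gamma$. I would then argue that these two families of conditions are equivalent. One direction uses the fact that $(\Sigma\cap\Gamma)^*\subseteq\Sigma^*$ and $(\Sigma\cap\Gamma)^*\subseteq\Gamma^*$, so each family contains the restricted family of test words $v\in(\Sigma\cap\Gamma)^*$; the other direction uses the support observation above to show that the extra test words in $\Sigma^*\setminus(\Sigma\cap\Gamma)^*$ (or in $\Gamma^*\setminus(\Sigma\cap\Gamma)^*$) are automatically satisfied because they involve a letter absent from both $w$ and $w'$, hence yield $0=0$.

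Concretely, I would show that each of the two characterizing conditions is equivalent to the single intrinsic condition: $\vert w\vert_v=\vert w'\vert_v$ for all $v\in(\Sigma\cap\Gamma)^*$ with $\vert v\vert_a\le 1$ for every letter $a$. Given this common reformulation, the biconditional in Proposition~\ref{1605c} follows immediately, since both sides are equivalent to one and the same statement. The verification that a test word containing a letter outside $\supp(w)\cup\supp(w')$ forces $\vert w\vert_v=0$ is the lemma doing the real work here, and it is elementary: if $v$ has such a letter then no occurrence of $v$ as a subword can exist.

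I do not expect any genuine obstacle in this argument; the content is entirely bookkeeping about which test words matter. The only point requiring a little care is making explicit that the quantity $\vert w\vert_v$ is intrinsic to $w$ and $v$ and does not depend on the alphabet in which one regards them, which is exactly the remark recorded earlier that $\supp(w)$ is independent of $\Sigma$. Once that is noted, the equivalence of the two restricted families of subword-count equalities is transparent, and the proposition is a direct corollary of Theorem~\ref{2701b}.
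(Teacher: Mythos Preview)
Your proposal is correct and follows essentially the same route as the paper: both arguments invoke Theorem~\ref{2701b} and dispose of test words containing a letter outside $\Sigma\cap\Gamma$ by noting that such $v$ satisfy $\vert w\vert_v=0=\vert w'\vert_v$. The only cosmetic difference is that the paper proves one direction directly and appeals to symmetry, whereas you pass through the common intermediate condition over $(\Sigma\cap\Gamma)^*$; the content is identical.
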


\begin{proof}
It suffices to prove any one direction. Assume $w$ is strongly $M$-equivalent to $w'$ with respect to $\Sigma$. 
Fix $v\in \Gamma^*$ such that
$\vert v\vert_a \leq 1$ for all $a\in \Gamma$.  If $v\notin  \Sigma^*$, then clearly 
$\vert w\vert_v=0=\vert w'\vert_v$. Otherwise, $\vert w\vert_v=\vert w'\vert_v$ by Theorem~\ref{2701b}. 
Therefore, $w$ and $w'$ are strongly $M$-equivalent with respect to $\Gamma$ by Theorem~\ref{2701b} again.
\end{proof}

From the definition, two words are strongly $M$-equivalent if and only if they are indistinguishable by any Parikh matrix mapping with respect to some ordering on the alphabet.
However, we can still cast strong $M$-equivalence in terms of a single Parikh matrix mapping.

\begin{proposition}\label{2106a}
Suppose $\Sigma$ is an ordered alphabet and $w, w'\in \Sigma^*$. 
\begin{enumerate}
\item Then $w$ and $w'$ are strongly $M$-equivalent if and only if $\Psi_{\Sigma}(\sigma w)=\Psi_{\Sigma}(\sigma w')$ for all  $\sigma\in \Sym (\vert \Sigma\vert)$.
\item If $w$ and $w'$ are strongly $M$-equivalent, then 
$\sigma w$ and $\sigma w'$ are strongly $M$-equivalent for all $\sigma\in \Sym (\vert \Sigma\vert)$.
\item If $w$ and $w'$ are strongly $M$-equivalent, then $\pi_{\Gamma}(w)$ and $\pi_{\Gamma}(w')$ are strongly $M$-equivalent for all $\Gamma \subseteq \Sigma$.
\end{enumerate}
\end{proposition}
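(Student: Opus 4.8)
The plan is to treat the three parts in order, using part~(1) as the engine for part~(2) and Theorem~\ref{2701b} as the engine for part~(3).

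For part~(1) I would start from the definition: $w \overset{s}{\equiv}_M w'$ asserts that $\Psi_{\Sigma'}(w)=\Psi_{\Sigma'}(w')$ for every ordered alphabet $\Sigma'$ whose underlying alphabet is that of $\Sigma$. The key observation (writing $s=\vert\Sigma\vert$) is that these ordered alphabets are precisely the $\sigma\Sigma$ with $\sigma\in\Sym(s)$: since $\sigma\mapsto\sigma^{-1}$ is a bijection of $\Sym(s)$, the list $\{a_{\sigma^{-1}(1)}<\dotsb<a_{\sigma^{-1}(s)}\}$ runs through all orderings as $\sigma$ varies. Thus strong $M$-equivalence is equivalent to $\Psi_{\sigma\Sigma}(w)=\Psi_{\sigma\Sigma}(w')$ for all $\sigma$, and Lemma~\ref{2405c}, which gives $\Psi_{\sigma\Sigma}(w)=\Psi_{\Sigma}(\sigma w)$, converts this into the stated criterion $\Psi_{\Sigma}(\sigma w)=\Psi_{\Sigma}(\sigma w')$ for all $\sigma$.

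For part~(2) I would apply part~(1) twice together with the group structure of $\Sym(s)$. Granting $w\overset{s}{\equiv}_M w'$, part~(1) gives $\Psi_{\Sigma}(\rho w)=\Psi_{\Sigma}(\rho w')$ for all $\rho$. To conclude $\sigma w\overset{s}{\equiv}_M \sigma w'$ it suffices, again by part~(1), to check $\Psi_{\Sigma}(\tau(\sigma w))=\Psi_{\Sigma}(\tau(\sigma w'))$ for all $\tau$. Here I would note that the morphism induced by $\tau$ composed with that induced by $\sigma$ is the morphism induced by $\tau\sigma$, so $\tau(\sigma w)=(\tau\sigma)w$; since $\tau\mapsto\tau\sigma$ is a bijection of the group, the required equalities are exactly those already supplied by part~(1). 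Alternatively, one can argue purely combinatorially through Theorem~\ref{2701b}, using $\vert\sigma w\vert_v=\vert w\vert_{\sigma^{-1}v}$ and the fact that $\sigma^{-1}v$ still carries every letter with multiplicity at most one.

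For part~(3) I would work directly with the combinatorial characterization of Theorem~\ref{2701b}, legitimized for the smaller alphabet $\Gamma$ by the absoluteness in Proposition~\ref{1605c}. Fix $v\in\Gamma^*$ with $\vert v\vert_a\leq 1$ for all $a\in\Gamma$. The crucial identity is $\vert\pi_{\Gamma}(w)\vert_v=\vert w\vert_v$ whenever $\supp(v)\subseteq\Gamma$: deleting the letters outside $\Gamma$ neither creates nor destroys occurrences of a subword built only from letters of $\Gamma$. Since such a $v$, viewed in $\Sigma^*$, also satisfies $\vert v\vert_a\leq 1$ for every $a\in\Sigma$ (the letters of $\Sigma\setminus\Gamma$ occurring zero times), strong $M$-equivalence of $w$ and $w'$ over $\Sigma$ yields $\vert w\vert_v=\vert w'\vert_v$. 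Chaining the identities gives $\vert\pi_{\Gamma}(w)\vert_v=\vert\pi_{\Gamma}(w')\vert_v$, and Theorem~\ref{2701b} then delivers $\pi_{\Gamma}(w)\overset{s}{\equiv}_M\pi_{\Gamma}(w')$. The reasoning is light throughout, so I anticipate no serious obstacle; the only points needing care are the bookkeeping in part~(1) that the $\sigma\Sigma$ exhaust all orderings and the verification in part~(3) that projection preserves subword counts for subwords supported in $\Gamma$, and I expect the latter to be the one place where a short explicit argument rather than a one-line appeal is warranted.
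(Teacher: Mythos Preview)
Your proposal is correct and follows essentially the same route as the paper: part~(1) via Lemma~\ref{2405c} and the fact that $\sigma\Sigma$ exhausts all orderings, part~(2) from part~(1) using the group structure, and part~(3) from Theorem~\ref{2701b} via the identity $\vert\pi_{\Gamma}(w)\vert_v=\vert w\vert_v$ for $v\in\Gamma^*$. The paper's proof is simply a terser statement of the same three ideas.
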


\begin{proof}
Part $1$ follows from Lemma~\ref{2405c} while Part $2$ follows from Part $1$. 
Part~$3$ follows from Theorem~\ref{2701b} because $\vert \pi_{\Gamma}(w)\vert_v=\vert w\vert_v$ for all $w\in \Sigma^*$ and $v\in \Gamma^*$.
\end{proof}

Clearly, if $w'$ is obtained from $w$ by swapping some two consecutive distinct letters, then  $w$ and $w'$ are  not strongly $M$-equivalent.
The following rule is an analogue of Rule~$E2$ for strong $M$-equivalence. The next proposition shows that it is  a sound rule.
Suppose $\Sigma$ is an alphabet and $w,w'\in \Sigma^*$. 
\begin{itemize}
\item[$S\!E$.] If $w=x\alpha\beta y\beta \alpha z$ and $w'=x\beta \alpha y\alpha\beta z$ for some $\alpha,\beta\in \Sigma$, $x,z\in \Sigma^*$, and $y\in \{\alpha,\beta\}^*$,  then $w$ and $w'$ are strongly $M$-equivalent.
\end{itemize}

\begin{remark}\label{2106c}
For the binary alphabet, Rule~$S\!E$ coincides with Rule~$E2$.  
\end{remark}

\begin{definition}
Suppose  $\Sigma$ is an alphabet and $w,w'\in \Sigma^*$. 
We say that $w$ and $w'$ are \emph{strongly elementarily matrix equivalent} (\emph{$M\!S\!E$-equivalent}), denoted $w\equiv_{M\!S\!E} w'$, if{f} $w'$ results from $w$ by finitely many applications of Rule $S\!E$. 
\end{definition}

\begin{proposition}\label{2905b}
Suppose $\Sigma$ is an alphabet and $w, w'\in \Sigma^*$. If $w$ and $w'$ are $M\!S\!E$-equivalent, then they are strongly $M$-equivalent.
\end{proposition}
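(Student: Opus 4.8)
The plan is to exploit two facts established above: that $\overset{s}{\equiv}_M$ is an equivalence relation which is both left and right invariant, and that for a binary alphabet strong $M$-equivalence coincides with ordinary $M$-equivalence. Since $M\!S\!E$-equivalence is by definition the result of finitely many applications of Rule~$S\!E$, and $\overset{s}{\equiv}_M$ is transitive, it suffices to prove that a \emph{single} application of Rule~$S\!E$ yields strongly $M$-equivalent words. So I would fix $w=x\alpha\beta y\beta\alpha z$ and $w'=x\beta\alpha y\alpha\beta z$ with $\alpha,\beta\in\Sigma$, $x,z\in\Sigma^*$, and $y\in\{\alpha,\beta\}^*$, and aim to show $w\overset{s}{\equiv}_M w'$.

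First I would peel off the common prefix $x$ and common suffix $z$. By right invariance it is enough to establish $x\alpha\beta y\beta\alpha\overset{s}{\equiv}_M x\beta\alpha y\alpha\beta$, and then by left invariance it is enough to establish
\[
\alpha\beta y\beta\alpha\ \overset{s}{\equiv}_M\ \beta\alpha y\alpha\beta .
\]
Both of these words lie in $\{\alpha,\beta\}^*$, since $y\in\{\alpha,\beta\}^*$, so the question has been reduced to a binary one. Here I would invoke Proposition~\ref{1605c}: it is harmless to verify the equivalence with respect to the two-letter alphabet $\{\alpha,\beta\}$ rather than all of $\Sigma$.

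Over $\{\alpha,\beta\}$, the two words differ by precisely one application of Rule~$E2$: taking the ordering $\{\alpha<\beta\}$, they have the shape $a_1a_2\,y\,a_2a_1$ and $a_2a_1\,y\,a_1a_2$, and the side condition $y\in(\Sigma\setminus\{a_{0},a_{3}\})^*$ is vacuous because those neighbouring letters do not exist (this is exactly the content of Remark~\ref{2106c}). Hence by Theorem~\ref{2605c} the two words are $M$-equivalent with respect to $\{\alpha<\beta\}$; and since for a binary alphabet $M$-equivalence does not depend on the ordering, they are $M$-equivalent with respect to $\{\beta<\alpha\}$ as well, i.e.\ strongly $M$-equivalent over $\{\alpha,\beta\}$. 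Lifting back through Proposition~\ref{1605c} and then re-applying left and right invariance recovers $w\overset{s}{\equiv}_M w'$, and transitivity disposes of the general case.

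I do not anticipate a serious obstacle, since the statement is essentially the soundness of Rule~$S\!E$ and everything has been arranged to fall out of earlier results. The one point that needs care is the reduction step: one must be sure that stripping $x$ and $z$ is legitimate, which rests squarely on the invariance of $\overset{s}{\equiv}_M$, and that the residual words are genuinely binary, which is guaranteed by the hypothesis $y\in\{\alpha,\beta\}^*$ built into Rule~$S\!E$. A purely combinatorial alternative would bypass invariance and instead verify $\vert w\vert_v=\vert w'\vert_v$ for every $v$ with $\vert v\vert_a\le 1$ via Theorem~\ref{2701b}, splitting on whether $v$ contains a letter outside $\{\alpha,\beta\}$; this works too but is more laborious, so I would prefer the invariance route.
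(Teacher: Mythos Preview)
Your proof is correct, but it takes a different route from the paper's. The paper argues directly from the definition of strong $M$-equivalence: fix an arbitrary ordering on $\Sigma$; if $\alpha$ and $\beta$ happen to be adjacent in that ordering then Rule~$E2$ applies to $w$ and $w'$ as written (since $y\in\{\alpha,\beta\}^*$ trivially avoids the two forbidden neighbouring letters), while if $\alpha$ and $\beta$ are not adjacent then two applications of Rule~$E1$ already turn $w$ into $w'$. Either way $w\equiv_M w'$ for that ordering, and since the ordering was arbitrary, $w\overset{s}{\equiv}_M w'$.

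Your approach is more modular: you strip the common prefix and suffix using invariance, then invoke absoluteness (Proposition~\ref{1605c}) to collapse to the two-letter alphabet, where Rule~$S\!E$ is just Rule~$E2$ and $M$-equivalence coincides with strong $M$-equivalence. This nicely exercises the earlier structural results, at the cost of a longer dependency chain; the paper's argument is shorter and entirely self-contained. A minor point: you do not need the full force of Theorem~\ref{2605c} (the characterisation of binary $M$-equivalence), only the soundness of Rule~$E2$, which is recorded in the paper before that theorem.
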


\begin{proof}
We may assume that $w'$ is obtained from $w$ by a single application of rule $S\!E$ because of transitivity of strong $M$-equivalence. Suppose $w=x \alpha\beta y\beta \alpha z$ and $w'=x\beta \alpha y\alpha\beta z$ for some $\alpha,\beta\in \Sigma$, $x,z\in \Sigma^*$, and $y\in \{\alpha,\beta\}^*$. 
Suppose $\Gamma$ is any ordered alphabet with underlying alphabet $\Sigma$. By Rule $E1$ and Rule $E2$, $w$ is $M$-equivalent to $w'$ with respect to $\Gamma$, regardless of whether $\alpha$ and $\beta$ are consecutive or not in $\Gamma$. Hence, $w$ is strongly $M$-equivalent to $w'$.
\end{proof}

Just as $M\!E$-equivalence fails to characterize $M$-equivalence, $M\!S\!E$-equivalance does not characterize strong $M$-equivalence either (see Example~\ref{0306a}). Motivated by the historical development accounted after Definition~\ref{1906a},
our next section is the outcome of our attempt to address the following question.

\begin{question}
How complicated a counterexample witnessing the fact that $M\!S\!E$-equivalence (respectively $M\!E$-equivalence) is strictly weaker than strong $M$-equivalence (respectively $M$-equivalence) has to be?
\end{question}

\section{Some Characterization Results}


Some simple analysis should convince the reader of the following remark.

\begin{remark}\label{1506e}
Suppose $\Sigma=\{a,b,c\}$ and $w\in \Sigma^*$. 
\begin{enumerate}
\item $\vert w\vert_{abc}=0$ if and only if $w=w_1w_2w_3$ for some $w_1\in \{b,c\}^*$, $w_2\in \{a,c\}^*$ and $w_3\in \{a,b\}^*$. 
\item $\vert w\vert_{abc}=1$ if and only if $w=w_1aw_2bw_3c w_4$ for some unique $w_1\in \{b,c\}^*$, $w_2\in c^*$, $w_3\in a^*$ and $w_4\in \{a,b\}^*$. 
\end{enumerate}
\end{remark}

\begin{theorem}\label{2905a}
Suppose $\Sigma=\{a,b,c\}$ and $w, w'\in \Sigma^*$ with $\vert w\vert_{abc}=\vert w'\vert_{abc}\leq 1$. Then $w \overset{s}{\equiv}_M w'$ if and only if 
$w \equiv_{M\!S\!E} w'$.
\end{theorem}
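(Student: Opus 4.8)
The backward direction is already settled: if $w \equiv_{M\!S\!E} w'$ then $w \overset{s}{\equiv}_M w'$ by Proposition~\ref{2905b}. So the entire content of the theorem lies in the forward direction, and the plan is to prove: if $w \overset{s}{\equiv}_M w'$ and $\vert w\vert_{abc} = \vert w'\vert_{abc} \leq 1$, then $w'$ can be reached from $w$ by finitely many applications of Rule $S\!E$. I would split into the two cases $\vert w\vert_{abc} = 0$ and $\vert w\vert_{abc} = 1$ governed by the normal forms in Remark~\ref{1506e}.

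In the case $\vert w\vert_{abc} = 0$, Remark~\ref{1506e}(1) gives $w = w_1 w_2 w_3$ with $w_1 \in \{b,c\}^*$, $w_2 \in \{a,c\}^*$, $w_3 \in \{a,b\}^*$, and similarly for $w'$. The key observation is that strong $M$-equivalence forces the binary projections to match: by Proposition~\ref{2106a}(3), $\pi_{a,b}(w) \overset{s}{\equiv}_M \pi_{a,b}(w')$, and likewise for the $\{a,c\}$ and $\{b,c\}$ projections. The plan is to use Theorem~\ref{2701b} to pin down the structure — in particular $\vert w\vert_{ab}, \vert w\vert_{ac}, \vert w\vert_{bc}$ together with the letter counts determine how the three blocks interleave. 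I would show that the block decomposition is essentially rigid once these quantities are fixed, so that $w$ and $w'$ differ only by rearrangements of the form Rule $S\!E$ permits within the binary projections (recalling, via Remark~\ref{2106c} and Theorem~\ref{2605c}, that $S\!E$ characterizes strong $M$-equivalence on each binary subalphabet). The technical heart is confirming that a sequence of $S\!E$-moves on binary projections can be lifted to $S\!E$-moves on the full ternary word without disturbing the third letter.

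In the case $\vert w\vert_{abc} = 1$, Remark~\ref{1506e}(2) gives the finer normal form $w = w_1 a w_2 b w_3 c w_4$ with $w_1 \in \{b,c\}^*$, $w_2 \in c^*$, $w_3 \in a^*$, $w_4 \in \{a,b\}^*$, and the uniqueness clause is crucial. The plan is to argue that the single $abc$-occurrence acts as a rigid anchor: the distinguished letters $a$, $b$, $c$ realizing the unique subword occurrence, together with the segments $w_2 \in c^*$ and $w_3 \in a^*$, cannot be moved by any $S\!E$-application (since $S\!E$ only permutes a pattern $\alpha\beta\,y\,\beta\alpha$ within a two-letter context). So I would reduce the problem to matching the four peripheral blocks $w_1, w_2, w_3, w_4$ between $w$ and $w'$, and then apply the binary analysis from the first case to each block. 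The combinatorial bookkeeping here — verifying that equality of all $\vert\cdot\vert_v$ for length-$\leq 3$ multilinear $v$ forces the block counts to agree and hence the uniqueness forces a letter-by-letter correspondence — is where I expect the real work to be.

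The main obstacle will be the lifting step: strong $M$-equivalence of the binary projections guarantees $S\!E$-rewritings exist at the projected level, but one must show these rewritings can be realized by genuine ternary $S\!E$-moves, i.e.\ that the ``absent'' third letter sitting between the swapped letters never obstructs an application of Rule $S\!E$ (whose middle factor $y$ must lie in $\{\alpha,\beta\}^*$). Controlling this interaction — ensuring intervening occurrences of the third letter can be cleared out of the way by prior $S\!E$-moves, using that $\vert w\vert_{abc} \leq 1$ severely limits how the three letters can interleave — is the crux, and it is precisely why the hypothesis $\vert w\vert_{abc} \leq 1$ is needed rather than an unrestricted ternary statement.
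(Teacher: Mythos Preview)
Your high-level plan matches the paper: handle the backward direction via Proposition~\ref{2905b}, split the forward direction into $\vert w\vert_{abc}=0$ and $\vert w\vert_{abc}=1$ using the normal forms of Remark~\ref{1506e}, and reduce to the binary characterization (Theorem~\ref{2605c} plus Remark~\ref{2106c}). Your treatment of the case $\vert w\vert_{abc}=1$ is essentially the paper's: one proves a rigidity claim ($\vert w_1\vert_b=\vert w_1'\vert_b$, $w_2=w_2'$, $w_3=w_3'$, $\vert w_4\vert_b=\vert w_4'\vert_b$) using counts like $\vert w\vert_{bac}$, and then the binary characterization on $w_1$ and $w_4$ finishes it.

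Where you diverge from the paper is in the case $\vert w\vert_{abc}=0$, and here your expectations are slightly off in two places. First, the block decomposition is \emph{not} rigid in the sense you hope: one does get $\vert w_1\vert_b=\vert w_1'\vert_b$ (via $\vert w\vert_{ac}$ and $\vert w\vert_{bac}$), but $\vert w_1\vert_c$ and $\vert w_1'\vert_c$ can genuinely differ, and likewise $\vert w_3\vert_a$ and $\vert w_3'\vert_a$. The paper's device is to set $\alpha=\vert w_1\vert_c-\vert w_1'\vert_c$ and $\beta=\vert w_3\vert_a-\vert w_3'\vert_a$, case-split on their signs, and show (for instance) $w_1\equiv_M w_1'c^{\alpha}$ over $\{b,c\}$, $c^{\alpha}w_2 a^{\beta}\equiv_M w_2'$ over $\{a,c\}$, and $w_3\equiv_M a^{\beta}w_3'$ over $\{a,b\}$; these chain together into an $M\!S\!E$-derivation. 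Second, the ``lifting'' problem you flag as the crux is actually a non-issue in this approach: because each $w_i$ is already a \emph{binary substring} of $w$ (not merely a projection), every Rule~$S\!E$ move transforming $w_1$ into $w_1'c^{\alpha}$ is literally a Rule~$S\!E$ move on $w$ --- there is no third letter sitting inside the factor to obstruct it. Working with projections $\pi_{b,c}(w)$ instead would indeed create the lifting obstacle you describe, but the block decomposition sidesteps it entirely.
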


\begin{proof}
By Proposition~\ref{2905b}, it remains to prove the forward direction. Assume $w$ is strongly $M$-equivalent to $w'$. 

First, consider the case $\vert w\vert_{abc}=\vert w'\vert_{abc}=0$.

By Remark~\ref{1506e}, $w=w_1w_2w_3$ and $w'=w_1'w_2'w_3'$ for some $w_1, w_1'\in \{b,c\}^*$, $w_2,w_2'\in \{a,c\}^*$ and $w_3,w_3'\in \{a,b\}^*$.
Notice that $\vert w_2\vert_{ac}=\vert w\vert_{ac} =\vert w'\vert_{ac}=\vert w_2'\vert_{ac}$ and
$\vert w_1\vert_b\vert w_2\vert_{ac}=\vert w\vert_{bac}=\vert w'\vert_{bac}=\vert w_1'\vert_b\vert w_2'\vert_{ac}$. Thus $\vert w_1\vert_b=\vert w_1'\vert_b$ and so $\vert w_3\vert_b=\vert w_3'\vert_b$ as well. 

Since $w \overset{s}{\equiv}_M w'$, by Proposition~\ref{2106a}(3), $\pi_{b,c} (w)\equiv_M \pi_{b,c} (w')$ with respect to $\{b<c\}$, $\pi_{a,c} (w)\equiv_M \pi_{a,c} (w')$
with respect to $\{a<c\}$, and $\pi_{a,b} (w)\equiv_M \pi_{a,b} (w')$ with respect to $\{a<b\}$.

Let $\alpha=\vert w_1\vert_c -\vert w_1'\vert_c$ and $\beta=\vert w_3\vert_a -\vert w_3'\vert_a$. There are altogether four (non-mutually exclusive) cases depending on the nonpositivity or nonnegativity of $\alpha$ and $\beta$. By interchanging $w$ and $w'$, it suffices to consider the following two cases.

\textit{Case 1.} $\alpha\geq 0$ and $\beta \geq 0$.

Since $w_1 \pi_c(w_2)\pi_b(w_3)=\pi_{b,c} (w)\equiv_M \pi_{b,c} (w')= w_1' \pi_c(w_2')\pi_b(w_3') $ with respect to $\{b<c\}$ and $\vert w_3\vert_b=\vert w_3'\vert_b$, it follows that $w_1\equiv_M w_1' \underbrace{c\dotsm c}_{\alpha \text{ times}}$ with respect to $\{b<c\}$. Similarly, $\underbrace{c\dotsm c}_{\alpha \text{ times}}  w_2  \underbrace{a\dotsm a}_{\beta \text{ times}} \equiv_M w_2'$ with respect to $\{a<c\}$
and $w_3 \equiv_M \underbrace{a\dotsm a}_{\beta \text{ times}}  w_3' $ with respect to $\{a<b\}$.
Therefore, by Theorem~\ref{2605c} and Remark~\ref{2106c}, 
$$w_1w_2w_3 \equiv_{M\!S\!E} w_1' \underbrace{c\dotsm c}_{\alpha \text{ times}} w_2w_3 \equiv_{M\!S\!E} w_1'  \underbrace{c\dotsm c}_{\alpha \text{ times}}  w_2
\underbrace{a\dotsm a}_{\beta \text{ times}} w_3'\equiv_{M\!S\!E} w_1'w_2'w_3'.$$

\textit{Case 2.} $\alpha\geq 0$ and $\beta \leq 0$. 

In this case, $\underbrace{c\dotsm c}_{\alpha \text{ times}}  w_2   \equiv_M w_2' \underbrace{a\dotsm a}_{\beta \text{ times}}  $ with respect to $\{a<c\}$
and $ \underbrace{a\dotsm a}_{\beta \text{ times}}  w_3 \equiv_M  w_3' $ with respect to $\{a<b\}$.
Therefore,  $$w_1w_2w_3 \equiv_{M\!S\!E} w_1' \underbrace{c\dotsm c}_{\alpha \text{ times}}  w_2w_3 \equiv_{M\!S\!E}
w_1' w_2'  \underbrace{a\dotsm a}_{\beta \text{ times}}  w_3 \equiv_{M\!S\!E} w_1'w_2'w_3'.$$

Now, consider the case $\vert w\vert_{abc}=\vert w'\vert_{abc}=1$.

By Remark~\ref{1506e},  $w=w_1aw_2bw_3c w_4$ and $w'=w_1'aw_2'bw_3'c w_4'$ for some $w_1, w_1'\in \{b,c\}^*$, $w_2, w_2'\in c^*$, $w_3, w_3'\in a^*$ and $w_4, w_4'\in \{a,b\}^*$.

\begin{claim}
$\vert w_1\vert_b=\vert w_1'\vert_b$, $w_2=w_2'$, $w_3=w_3'$, and $\vert w_4\vert_b=\vert w_4'\vert_b$.
\end{claim}

To prove the claim, without loss of generality,  assume $\vert w_1\vert_b\geq\vert w_1'\vert_b+ 1$.
Note that $\vert w\vert_{bac}=\vert w_1\vert_b \vert aw_2bw_3c\vert_{ac}+\vert w_3\vert=   \vert w_1\vert_b \vert w\vert_{ac}+\vert w_3\vert$.
Hence, $\vert w\vert_{bac} \geq \vert w_1\vert_b \vert w\vert_{ac}  \geq \left( \vert w_1'\vert_b+ 1   \right)\vert w'\vert_{ac} $.
 However, $\vert w'\vert_{ac}>\vert w_3'\vert$. It follows that $\vert w\vert_{bac} > \vert w_1'\vert_b \vert w'\vert_{ac}+\vert w_3'\vert=\vert w'\vert_{bac}$, a contradiction.
Therefore, $\vert w_1\vert_b=\vert w_1'\vert_b$. The rest of the claim follows easily from here.

Since $\pi_{b,c} (w)\equiv_M \pi_{b,c} (w')$ with respect to $\{b<c\}$, using the claim and the right invariance of $M$-equivalence, it follows that $w_1\equiv_M  w_1'$ with respect to $\{b<c\}$. Similarly, $w_4\equiv_M w_4'$ with respect to $\{a<b\}$. Therefore,
$$w_1aw_2bw_3c w_4 \equiv_{M\!S\!E} w_1'aw_2bw_3cw_4 \equiv_{M\!S\!E} w_1'aw_2bw_3cw_4' = w_1'aw_2'bw_3'cw_4'$$
and the proof is complete.
\end{proof}

\begin{example}\label{0306a}
Let $w=bccaabcba$ and $w'=cbabccaab$. It is easy to verify that they are strongly $M$-equivalent. However, Rule $S\!E$ cannot be applied to either of them. Therefore, $w$ and $w'$ are not $M\!S\!E$-equivalent. Since $\vert w\vert_{abc}=2$, this shows that Theorem~\ref{2905a} is optimal.
\end{example}

Similarly, $M\!E$-equivalence can be compared against $M$-equivalence.

\begin{theorem}\label{2705a}
Suppose $\Sigma=\{a<b<c\}$ and $w, w'\in \Sigma^*$ and $\vert w\vert_{abc}=\vert w'\vert_{abc}=0$. Then $w \equiv_M w'$ if and only if 
$w \equiv_{M\!E} w'$.
\end{theorem}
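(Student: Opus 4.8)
The backward direction is immediate: Rules $E1$ and $E2$ preserve the Parikh matrix (they are the moves validated by Theorem~\ref{1206a} and Theorem~\ref{2605c}), so $w \equiv_{M\!E} w'$ implies $w \equiv_M w'$. For the forward direction, the plan is to reduce the ternary problem to two essentially independent binary ones, as the free commutation of $a$ and $c$ (Rule $E1$, valid since $a$ and $c$ are non-consecutive in $\{a<b<c\}$) suggests. First I would record the two consequences of $w \equiv_M w'$ that survive projection. Since $|{\cdot}|_{ab}$ and $|{\cdot}|_{bc}$ are entries of the Parikh matrix over $\{a<b<c\}$, and projecting away the third letter preserves these counts, $w \equiv_M w'$ forces $\pi_{a,b}(w) \equiv_M \pi_{a,b}(w')$ over $\{a<b\}$ and $\pi_{b,c}(w) \equiv_M \pi_{b,c}(w')$ over $\{b<c\}$. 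In contrast with the strong setting of Theorem~\ref{2905a}, no information about $\pi_{a,c}$ is available; this is exactly why Rule $E1$, which renders the $a$--$c$ order irrelevant, must do the work that the third projection did there.

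Next I would pass to a block description. Using the $b$'s as separators, write $w = u_0 b u_1 b \cdots b u_m$ with each $u_i \in \{a,c\}^*$ and $m = |w|_b$; applying Rule $E1$ inside each block, $w$ is $1$-equivalent to a word determined solely by the data $P = (p_0,\dotsc,p_m)$ and $Q = (q_0,\dotsc,q_m)$, where $p_i = |u_i|_a$ and $q_i = |u_i|_c$. In these terms $\pi_{a,b}(w)$ records $P$, $\pi_{b,c}(w)$ records $Q$, and the hypothesis $|w|_{abc}=0$ translates (via Remark~\ref{1506e}) into a separation threshold: every block containing an $a$ lies weakly to the right of every block containing a $c$. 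The crucial bookkeeping observation is that a Rule $E2$ application with $k=1$ moves $a$'s across $b$'s, leaves $\pi_{b,c}$ untouched, and realizes a single binary Rule $E2$ step on $\pi_{a,b}$; symmetrically, an application with $k=2$ edits $\pi_{b,c}$ via the $c$'s while fixing $\pi_{a,b}$. Thus the two projections can be edited independently, and by Theorem~\ref{2605c} each projection of $w$ can in principle be driven to that of $w'$ by such steps.

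The one real obstacle is that these two editing processes are coupled: every intermediate word must still satisfy $|{\cdot}|_{abc}=0$, and a Rule $E2$ step lifts to the full word only when the segment it straddles contains no copy of the third letter. I would resolve this by choosing the order of the two phases. Writing $i_*$ for the first $a$-block and $k^*$ for the last $c$-block, the inequalities $i_*(P)\ge k^*(Q)$ and $i_*(P')\ge k^*(Q')$ coming from $|w|_{abc}=0$ and $|w'|_{abc}=0$ make it impossible for both of the intermediate configurations $(P',Q)$ and $(P,Q')$ to violate the threshold condition; indeed, were $i_*(P')<k^*(Q)$ and $i_*(P)<k^*(Q')$, one would obtain $i_*(P)>i_*(P')>i_*(P)$, a contradiction. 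Hence at least one of the orders ``fix the $c$'s first, then the $a$'s'' or ``fix the $a$'s first, then the $c$'s'' keeps every intermediate word in the class.

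Having fixed such an order, each phase becomes a binary $M$-equivalence problem (Theorem~\ref{2605c}) on a sub-word confined to a region from which the other letter is absent: when the $c$'s are adjusted first, all $c$-activity stays in blocks at most $i_*(P)$ and the straddled segments lie strictly below $i_*(P)$, where no $a$ occurs; in the second phase the $a$-activity stays in blocks at least $k^*(Q')$ with segments strictly above $k^*(Q')$, where no $c$ occurs. Consequently every required Rule $E2$ step lifts without interference and preserves $|{\cdot}|_{abc}=0$, while the shared boundary block is handled by re-normalizing with Rule $E1$ at the start of each phase. Concatenating the two phases, together with the bracketing Rule $E1$ steps, transforms $w$ into $w'$ using only Rules $E1$ and $E2$, giving $w \equiv_{M\!E} w'$. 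I expect the threshold/phase-ordering argument to be the delicate point; everything else is binary bookkeeping and repeated appeals to Theorem~\ref{2605c}.
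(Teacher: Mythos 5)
Your proposal is correct and takes essentially the same route as the paper's proof: after the Rule~$E1$ normalization coming from Remark~\ref{1506e}, the word splits into a $\{b,c\}$-part followed by an $\{a,b\}$-part, each phase is discharged by the binary characterization (Theorem~\ref{2605c}) applied to a sub-word from which the third letter is absent, and your phase-ordering dichotomy is precisely the paper's ``without loss of generality $\alpha\geq 0$'' (your intermediate configuration $(P,Q')$ corresponds, up to $\equiv_1$, to the paper's intermediate word $w_1'\,b^{\alpha}\,w_2$). The difference is only bookkeeping---block vectors $P,Q$ versus the two-factor decomposition $w\equiv_1 w_1w_2$---so the two arguments coincide in substance.
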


\begin{proof}
It suffices to prove the forward direction as $M\!E$-equivalence immediately implies $M$-equivalence. Assume $w$ is $M$-equivalent to $w'$. By Remark~\ref{1506e}, $w\equiv_1 w_1w_2$ and $w'\equiv_1 w_1'w_2'$, where $w_1,w_1'\in \{b,c\}^*$ and $w_2,w_2'\in \{a,b\}^*$. Since $w\equiv_M w'$, it follows that $\vert w_1 \vert_{bc}=\vert w_1' \vert_{bc}$ and $\vert w_1\vert_{c}=\vert w_1'\vert_c$. Without loss of generality, let $\vert w_1\vert_b - \vert w_1'\vert_b=  \vert w_2'\vert_b-\vert w_2\vert_b=  \alpha \geq 0$.
Then $w_1 \equiv_M w'\underbrace{bb\dotsm b}_{ \alpha \text{ times}  }$ with respect to $\{b<c\}$. Similarly, $\underbrace{bb\dotsm b}_{ \alpha \text{ times}  }  w_2  \equiv_M  w_2'$  with respect to $\{a<b\}$. Therefore, by Theorem~\ref{2605c}, $w \equiv_{M\!E}  w_1w_2  \equiv_{M\!E} w_1'\underbrace{bb\dotsm b}_{ \alpha \text{ times}  }  w_2 \equiv_{M\!E}  w_1' w_2'   \equiv_{M\!E} w'$ as required.
\end{proof}

\begin{example}\label{2006b}
Consider $w=cbbabcab$ and $w'=bcabcbba$. Then $w$ and $w'$ are \mbox{$M$-equivalent} but not $M\!E$-equivalent with respect to $\{a<b<c\}$. These are simpler than the counterexample of length 15 mentioned before. Furthermore, since $\vert w\vert_{abc}=1$, it shows that Theorem~\ref{2705a} is optimal. 
\end{example}

\begin{remark}\label{2206a}
In an older version of this article, it was suggested that \mbox{Example~\ref{2006b}} provides a counterexample of the shortest length. This was confirmed by an anonymous referee, who wrote a program to exhaustively check all the $9841$ ternary words of length at most eight. It was found out that there are $2729$ $M$-equivalence classes compared with $2732$ $M\!E$-equivalence classes. 
\end{remark}

In a certain way, the next theorem allows generation of pairs of $M$-equivalent ternary words that are not $M\!E$-equivalent. In fact, it strongly suggests that when the length of words gets bigger, a pair of $M$-equivalent words are less likely to be $M\!E$-equivalent.

\begin{theorem}\label{2006a}
Suppose $\Sigma=\{a<b<c\}$ and $w, w'\in \Sigma^*$ with \mbox{$\vert w\vert_{abc}=\vert w'\vert_{abc}=1$.} Then $w \equiv_M w'$ if and only if 
$w\equiv_1 w_1abcw_2$ and $w'\equiv_1 w_1'abcw_2'$ for some unique $w_1, w_1'\in \{b,c\}^*$ and $w_2, w_2'\in \{a,b\}^*$ such that
$\Psi_{\{b<c\}}(w_1bc)-\Psi_{\{b<c\}}(w_1'bc)=\begin{pmatrix}
0 & \alpha & 0\\
0 & 0 &  0\\
0 & 0 & 0
\end{pmatrix}       \text{ and }     \Psi_{\{a<b\}}(abw_2)-\Psi_{\{a<b\}}(abw_2')=\begin{pmatrix}
0 & 0 & 0\\
0 & 0 &  -\alpha\\
0 & 0 & 0
\end{pmatrix}$, where $\alpha=\vert w_1\vert_b-\vert w_1'\vert_b$.
Furthermore, $w$ and $w'$ are in fact $M\!E$-equivalent if and only if additionally $\alpha$ is zero.
\end{theorem}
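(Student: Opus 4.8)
The plan is to reduce both words to a canonical form under $\equiv_1$, translate $M$-equivalence into the two displayed matrix identities by a direct Parikh-matrix computation, and finally isolate the parameter $\alpha$ by an invariant that survives $M\!E$-equivalence only on the stratum $\vert w\vert_{abc}=1$.

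\emph{Canonical form.} First I would show that every $w$ with $\vert w\vert_{abc}=1$ satisfies $w\equiv_1 w_1abcw_2$ for a unique $w_1\in\{b,c\}^*$, $w_2\in\{a,b\}^*$. For existence I start from the unique factorization $w=u_1au_2bu_3cu_4$ of Remark~\ref{1506e}(2), with $u_2\in c^*$ and $u_3\in a^*$; since $a$ and $c$ differ by $2$ in the order, Rule~$E1$ lets me slide the block $u_2$ leftward across the $a$ and the block $u_3$ rightward across the $c$, giving $w\equiv_1 (u_1u_2)abc(u_3u_4)$ with $u_1u_2\in\{b,c\}^*$ and $u_3u_4\in\{a,b\}^*$. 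For uniqueness I use that $\pi_{b,c}$ and $\pi_{a,b}$ are invariant under $E1$ (swapping an adjacent $a,c$ changes neither projected word), together with $\pi_{b,c}(w_1abcw_2)=w_1\,bc\,b^{\vert w_2\vert_b}$ and $\pi_{a,b}(w_1abcw_2)=b^{\vert w_1\vert_b}\,ab\,w_2$; reading $w_1$ off the first string and $w_2$ off the second recovers both factors from the $\equiv_1$-class. Since $\equiv_1\,\subseteq\,\equiv_M$, we get $w\equiv_M w'$ iff $w_1abcw_2\equiv_M w_1'abcw_2'$, and these unique decompositions are the ones in the statement.

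\emph{From Parikh entries to the matrix identities.} I would compute $\Psi_{\Sigma}(w_1abcw_2)=\Psi_{\Sigma}(w_1)\Psi_{\Sigma}(abc)\Psi_{\Sigma}(w_2)$ directly, using that $w_1$ contributes no $a$ and $w_2$ no $c$; this expresses the six nontrivial entries as explicit affine forms in $\vert w_1\vert_b,\vert w_1\vert_c,\vert w_1\vert_{bc}$ and $\vert w_2\vert_a,\vert w_2\vert_b,\vert w_2\vert_{ab}$. The $(1,4)$ entry is $1$ for both words, so $w_1abcw_2\equiv_M w_1'abcw_2'$ amounts to equality of the remaining five entries. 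Computing $\Psi_{\{b<c\}}(w_1bc)$ and $\Psi_{\{a<b\}}(abw_2)$ as two-factor products, I then verify entrywise that the first displayed difference encodes exactly $\vert w_1\vert_c=\vert w_1'\vert_c$ and $\vert w_1\vert_b+\vert w_1\vert_{bc}=\vert w_1'\vert_b+\vert w_1'\vert_{bc}$ (its $(1,2)$ entry being $\alpha$ is merely the definition $\alpha=\vert w_1\vert_b-\vert w_1'\vert_b$), while the second encodes $\vert w_2\vert_a=\vert w_2'\vert_a$, $\vert w_2\vert_{ab}+\vert w_2\vert_b=\vert w_2'\vert_{ab}+\vert w_2'\vert_b$, and $\vert w_2\vert_b-\vert w_2'\vert_b=-\alpha$. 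These are precisely the five entry equalities, which proves the main biconditional.

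\emph{The refinement $\alpha=0\Leftrightarrow M\!E$-equivalence.} Again $w\equiv_{M\!E}w'$ iff $w_1abcw_2\equiv_{M\!E}w_1'abcw_2'$. For the forward direction I introduce $\beta(v)$, the number of $b$'s before the first $a$ in $v$, noting $\beta(w_1abcw_2)=\vert w_1\vert_b$. The claim is that $\beta$ is preserved by every $E1$ and $E2$ step applied to a word with $\vert v\vert_{abc}=1$. Rule~$E1$ slides the first $a$ only past $c$'s, never past a $b$, so $\beta$ is $E1$-invariant unconditionally; the $E2$ step $bc\cdots cb\leftrightarrow cb\cdots bc$ only permutes $b$'s and $c$'s among positions lying wholly on one side of the first $a$, hence fixes $\beta$. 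The sole dangerous case is the $E2$ step $ab\cdots ba\leftrightarrow ba\cdots ab$ whose block begins before the first $a$, i.e. with prefix $x\in\{b,c\}^*$; but that block carries at least two $ab$-subwords, so a later $c$ would force $\vert v\vert_{abc}\ge 2$ while the absence of a later $c$ forces $\vert v\vert_{abc}=0$, contradicting $\vert v\vert_{abc}=1$. As every $E1,E2$ step preserves the Parikh matrix and thus keeps $\vert v\vert_{abc}=1$ throughout any derivation, $\beta$ is constant along it, giving $\vert w_1\vert_b=\vert w_1'\vert_b$, i.e. $\alpha=0$. Conversely, $\alpha=0$ collapses the five equalities to $\Psi_{\{b<c\}}(w_1)=\Psi_{\{b<c\}}(w_1')$ and $\Psi_{\{a<b\}}(w_2)=\Psi_{\{a<b\}}(w_2')$, so by Theorem~\ref{2605c} each of $w_1\to w_1'$ and $w_2\to w_2'$ is reachable by binary Rule~$E2$; these lift verbatim to ternary $E2$ steps (type $k=2$ inside the prefix, type $k=1$ inside the suffix), yielding $w_1abcw_2\equiv_{M\!E}w_1'abcw_2'$.

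The main obstacle is the forward direction of the refinement. The natural single invariant $\vert w\vert_{bac}$ equals $\vert w_1\vert_b$ on canonical forms but is not $E1$-invariant, and one checks that every $E1,E2$-invariant combination of length-$3$ subword counts is in fact Parikh-determined, hence useless for detecting $\alpha$. The point is that $\beta$ is \emph{not} $E2$-invariant in general and becomes invariant only after restricting to $\vert v\vert_{abc}=1$; making the combinatorial exclusion of the bad $E2$ case precise is the crux of the argument.
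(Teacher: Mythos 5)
Your proof is correct and takes essentially the same route as the paper: the same $\equiv_1$-reduction to the unique form $w_1abcw_2$, the same entrywise translation of $M$-equivalence into the two displayed matrix identities (which the paper compresses into ``some simple analysis''), and, for the refinement, the same key observation --- your invariant $\beta$ (the number of $b$'s before the first $a$) is precisely the paper's remark that on words of the form $v_1av_2bv_3cv_4$ any application of Rule~$E2$ must occur inside $v_1$ or inside $v_4$, so that $\vert w_1\vert_b$ is preserved along any derivation. The only difference is presentational: you spell out the invariance of $\beta$ and the exclusion of an $E2$ block straddling the first $a$, details the paper leaves implicit.
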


\begin{proof}
By Remark~\ref{1506e}, $w\equiv_1 w_1abcw_2$ and $w'  \equiv_1 w_1'abcw_2'$ for some unique $w_1, w_1'\in \{b,c\}^*$ and $w_2, w_2'\in \{a,b\}^*$.
Then the first conclusion arrives by some simple analysis.

For the second conclusion, if $\alpha$ is zero, then $w_1 \equiv_M w_1'$ with respect to $\{b<c\}$ 
and $w_2 \equiv_M w_2'$ with respect to $\{a<b\}$; hence, $w\equiv_{ME} w'$ by Theorem~\ref{2605c}. 

Conversely, observe that any application of Rule $E2$ on words of the form $v_1av_2bv_3cv_4$, where $v_1\in \{b,c\}^*$, $v_2\in c^*$, $v_3\in a^*$,  and $v_4\in \{a,b\}^*$,  must  be applied either on $v_1$ or on $v_2$. Hence, if $w_1'abcw_2'$ is to be $M\!E$-equivalent to $w_1abcw_2$, it must follow that  $\vert w_1' \vert_b=\vert w_1\vert_b$.
Therefore, if $w$ is  in fact $M\!E$-equivalent to $w'$, then $\alpha=0$.
\end{proof}

\section{Weakly $M$-related}

If strong $M$-equivalence is the universal form of $M$-equivalence, now the existential counterpart will be introduced.

\begin{definition}
Suppose $\Sigma$ is an  alphabet.
Two words $w, w'\in \Sigma^*$ are \emph{weakly $M$-related}, denoted $w \backsim_M w'$,   if{f} $w$ and $w'$ are $M$-equivalent  with respect to some ordered alphabet with underlying alphabet $\Sigma$. 
\end{definition}


Clearly, $ab$ is not weakly $M$-related to $ba$ with respect to $\{a,b\}$ but they are with respect to a (strictly) larger alphabet. In fact, the following is true.

\begin{remark}\label{0506a}
If the alphabet has size at least three, then any swapping of two consecutive distinct letters results in a weakly $M$-related word.
\end{remark}

\begin{example}\label{1705a}
Suppose $\Sigma=\{a,b,c\}$. Then $acb$ and $cab$ are weakly $M$-related. Also,
$cab$ and $cba$ are weakly $M$-related. Assume $acb$ and $cba$ are
\text{$M$-equivalent} with respect to some ordered alphabet $\Gamma$ with underlying alphabet $\Sigma$. Then $a$ and $b$ cannnot be consecutive in $\Gamma$. Similarly, $a$ and $c$ cannnot be consecutive in $\Gamma$. There is no such $\Gamma$.
Therefore, $acb$ and $cba$ cannot be weakly $M$-related.
\end{example}

Example~\ref{1705a} shows that the relation $\backsim_M$ is not transitive. Thus $\backsim_M$ is not an equivalence relation, explaining our choice of ``weakly $M$-related", rather than ``weakly $M$-equivalent". The next theorem says that the transitive closure of $\backsim_M$ is identical to the Parikh equivalence.

\begin{theorem}\label{2405a}
Suppose $\Sigma$ is an alphabet of size at least three and $w, w'\in \Sigma^*$. Then $w$ and $w'$ are equivalent under the transitive closure of $\backsim_M$ if and only if $w$ and $w'$ have the same Parikh vector.
\end{theorem}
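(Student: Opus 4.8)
The plan is to treat the two implications separately; the forward direction is immediate, while the backward direction rests on a combinatorial sorting argument.

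For the forward direction, I would first note that $\backsim_M$ preserves the Parikh vector: if $w\backsim_M w'$, then $w\equiv_M w'$ with respect to some ordered alphabet, and by Theorem~\ref{1206a} the second diagonal of the common Parikh matrix records $\vert w\vert_a=\vert w'\vert_a$ for every $a\in\Sigma$. Since ``having the same Parikh vector'' is a transitive relation containing $\backsim_M$, and the transitive closure of $\backsim_M$ is by definition the smallest transitive relation containing $\backsim_M$, any two words equivalent under the transitive closure must share the same Parikh vector.

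For the backward direction, suppose $w$ and $w'$ have the same Parikh vector, so $w'$ is a rearrangement of the letters of $w$. The crux is that any such rearrangement is realized by finitely many transpositions of adjacent positions. I would prove this by induction on $\vert w\vert$: writing $w'=uz$ with $z$ its final letter, one locates an occurrence of $z$ in $w$ and bubbles it to the last position through successive adjacent swaps; deleting this letter from both words lowers the length by one while the Parikh vectors still agree, so the induction hypothesis sorts the remaining prefix without disturbing the final position.

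It then remains to verify that each adjacent swap stays within a single $\backsim_M$-chain. A swap of two equal adjacent letters leaves the word unchanged and is absorbed by the reflexivity of $\backsim_M$, whereas a swap of two distinct adjacent letters yields a weakly $M$-related word by Remark~\ref{0506a}; this is exactly where $\vert\Sigma\vert\geq 3$ enters, since a third letter can always be ordered strictly between the two being swapped so that Rule~$E1$ applies. Concatenating these single steps produces a chain $w=w_0\backsim_M w_1\backsim_M\dotsb\backsim_M w_n=w'$, establishing equivalence under the transitive closure. The argument has no deep obstacle: the only real content is the sorting-by-adjacent-transpositions step together with the recognition that $\vert\Sigma\vert\geq 3$ is precisely the hypothesis demanded by Remark~\ref{0506a}; the most delicate point is merely phrasing the induction so that the trivial equal-letter swaps cause no clutter.
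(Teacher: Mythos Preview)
Your proof is correct and follows the same route as the paper's own argument: the forward direction uses that $\backsim_M$ preserves the Parikh vector, and the backward direction reduces to adjacent transpositions together with Remark~\ref{0506a}. You simply add more detail---an explicit induction for the sorting step and a remark on equal-letter swaps---where the paper writes ``clearly'' and moves on.
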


\begin{proof}
If $w \backsim_M w'$, then $w$ and $w'$ have the same Parikh vector. By the definition of transitive closure,
the forward direction is immediate. Conversely, assume $w$ and $w'$ have the same Parikh vector. Clearly, $w$ can be transformed into $w'$ by making a sequence of swappings between two consecutive distinct letters. Hence, it suffices to note that each such swapping results in a weakly $M$-related word, and this is true by Remark~\ref{0506a}.
\end{proof}

The relation $\backsim_M$ is not absolute. Any two words having the same Parikh vector become weakly $M$-related simply by expanding their common.
In fact, the following theorem says that there is a bound to the number of auxiliary letters that should be added for that to happen.

\begin{theorem}\label{2501a}
Suppose $w$ and $w'$ have the same Parikh vector. Then $w$ and $w'$ are weakly $M$-related with respect to any alphabet having size at least $2\vert \supp(w)\vert -1$ that includes $\supp(w)$.
\end{theorem}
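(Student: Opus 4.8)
The plan is to exploit Rule $E1$. Recall that if two distinct letters are non-consecutive in the ordered alphabet, then they may be freely transposed wherever they appear adjacently in a word without changing the Parikh matrix. So if I can choose an ordering on the given alphabet under which \emph{no two letters of $\supp(w)$ are consecutive}, then every swap of two adjacent distinct letters occurring in $w$ becomes a legitimate application of Rule $E1$, and I can rearrange $w$ into $w'$ at will. Since ``weakly $M$-related'' only requires $M$-equivalence with respect to \emph{some} ordering, exhibiting one such ordering suffices.

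First I would set $s=\vert\supp(w)\vert$ and observe that, because $w$ and $w'$ have the same Parikh vector, $\supp(w')=\supp(w)$, both words lie in $\supp(w)^*$, and $w'$ is a rearrangement of the multiset of letters of $w$. The cases $s\leq 1$ are trivial, since then $w=w'$; so assume $s\geq 2$. Next, given any alphabet $\Sigma\supseteq\supp(w)$ with $\vert\Sigma\vert\geq 2s-1$, I would build the ordering. Writing $\supp(w)=\{b_1,\dotsc,b_s\}$, I note that $\vert\Sigma\setminus\supp(w)\vert=\vert\Sigma\vert-s\geq s-1$, so I can pick $s-1$ distinct letters $x_1,\dotsc,x_{s-1}$ from $\Sigma\setminus\supp(w)$ and impose the ordering $b_1<x_1<b_2<x_2<\dotsb<x_{s-1}<b_s<(\text{remaining letters in any order})$. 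Here $b_i$ occupies position $2i-1$, so any two distinct support letters $b_i,b_j$ sit at positions differing by $2\vert i-j\vert\geq 2$; that is, no two letters of $\supp(w)$ are consecutive in this ordered alphabet.

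Finally, since $w'$ is a rearrangement of $w$, I would transform $w$ into $w'$ by a sequence of adjacent transpositions (for instance, via a bubble-sort argument). Each such transposition swaps two adjacent letters of the current word, both of which lie in $\supp(w)$; if they are distinct they are non-consecutive in the chosen ordering, so the swap is an instance of Rule $E1$ and hence preserves $\Psi_\Sigma$. Therefore $w\equiv_M w'$ with respect to this ordered alphabet, which by definition says precisely that $w\backsim_M w'$ with respect to $\Sigma$.

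The only real content is the counting in the middle step: the bound $2s-1$ is exactly what guarantees at least $s-1$ spare letters, which is precisely the number of separators needed to interleave the $s$ support letters so that they remain pairwise non-adjacent in a linear order. I expect this interleaving construction to be the crux of the argument; the rest (reachability of any rearrangement by adjacent transpositions, and the applicability of Rule $E1$ to each transposition) is routine and follows immediately from Theorem~\ref{1206a}.
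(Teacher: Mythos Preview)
Your proof is correct and follows essentially the same approach as the paper: construct an ordering on $\Sigma$ in which no two letters of $\supp(w)$ are consecutive, which is possible precisely because $\vert\Sigma\vert\geq 2s-1$ provides at least $s-1$ separators. The only cosmetic difference is in the finish: instead of chaining Rule~$E1$ transpositions, the paper observes directly from Theorem~\ref{1206a} that under such an ordering every entry above the second diagonal of $\Psi_\Gamma(w)$ and $\Psi_\Gamma(w')$ is zero, so the Parikh matrices coincide as soon as the Parikh vectors do.
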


\begin{proof}
 Suppose $\Sigma$ is any alphabet of size at least $2\vert \supp(w)\vert -1$ that includes $\supp(w)$.
Let $\Gamma$ be any ordered alphabet with underlying alphabet $\Sigma$ such that the letters belonging to $\supp(w)$ are not consecutive in $\Gamma$. Obviously, this is possible because $\vert \Sigma\vert \geq 2\vert \supp(w)\vert -1$. By Theorem~\ref{1206a}, $\Psi_{\Gamma}(w)=\Psi_{\Gamma}(w')$ as all entries
above the second diagonal are zero due to the choice of $\Gamma$ and $\Psi(w)=\Psi(w')$.
Therefore, $w$ and $w'$ are  weakly $M$-related.
\end{proof}

\section{Conclusions}

Strong $M$-equivalence, as highlighted by Theorem~\ref{2701b}, is a natural and interesting notion on its own.
As the characterization of $M$-equivalence for the ternary alphabet has been a decade-old problem, 
it remains to be seen whether strong $M$-equivalence would be as formidable.
However, Theorem~\ref{2905a} says that strong $M$-equivalence can be characterized by $M\!S\!E$-equivalence for the first two ``layers" of ternary words.
It is intriguing which canonical extension of $M\!S\!E$-equivalence may characterize strong $M$-equivalence for the next layer of ternary words. 

Next, it is natural to study the strong version of $M$-ambiguity.
A word is strongly $M$-unambiguous if{f} it is not strongly $M$-equivalent to another distinct word. 
Every $M$-unambiguous word is strongly $M$-unambiguous but not vice versa. 
The characterization of $M$-unambiguous ternary words in the form of a long list was obtained by Serb\v{a}nut\v{a}  in \cite{SS06}. 
Although more ternary words are strongly $M$-unambiguous, the characterization of such words could be given by a shorter list due to the symmetrical nature of strong $M$-equivalence.

Finally, since $\backsim_M$ is not an equivalence relation, together with Theorems~\ref{2405a} and \ref{2501a},
the notion of ``weakly $M$-related" appears to be uninteresting. However, it may lead to other interesting combinatorial questions.

\section*{Acknowledgment}

The notion of ``weakly $M$-related" was suggested by Kiam Heong Kwa. Sound suggestions from an anonymous referee have partially led to the improvement of this article. Furthermore, the computational justification offered in Remark~\ref{2206a} is due to the same referee.
Finally, the author gratefully acknowledges support for this research by a short term grant No. 304/PMATHS/6313077 of Universiti Sains Malaysia.


\begin{thebibliography}{10}

\bibitem{aA07}
A.~Atanasiu, Binary amiable words, {\em Internat. J. Found. Comput. Sci.} {\bf
  18}(2)  (2007)  387--400.

\bibitem{AAP08}
A.~Atanasiu, R.~Atanasiu and I.~Petre, Parikh matrices and amiable words, {\em
  Theoret. Comput. Sci.} {\bf 390}(1)  (2008)  102--109.

\bibitem{aA14}
A.~Atanasiu, Parikh matrix mapping and amiability over a ternary alphabet, {\em Discrete Mathematics and Computer Science. In Memoriam
  Alexandru Mateescu {(1952-2005).}\/},   (2014), pp. 1--12.

\bibitem{AMM02}
A.~Atanasiu, C.~Mart{\'{\i}}n-Vide and A.~Mateescu, On the injectivity of the
  {P}arikh matrix mapping, {\em Fund. Inform.} {\bf 49}(4)  (2002)  289--299.

\bibitem{DS06}
C.~Ding and A.~Salomaa, On some problems of {M}ateescu concerning subword
  occurrences, {\em Fund. Inform.} {\bf 73}  (2006)  65--79.

\bibitem{FR04}
S.~Foss{\'e} and G.~Richomme, Some characterizations of {P}arikh matrix
  equivalent binary words, {\em Inform. Process. Lett.} {\bf 92}(2)  (2004)
  77--82.

\bibitem{MSSY01}
A.~Mateescu, A.~Salomaa, K.~Salomaa and S.~Yu, A sharpening of the {P}arikh
  mapping, {\em Theor. Inform. Appl.} {\bf 35}(6)  (2001)  551--564.

\bibitem{MSY04}
A.~Mateescu, A.~Salomaa and S.~Yu, Subword histories and {P}arikh matrices,
  {\em J. Comput. System Sci.} {\bf 68}(1)  (2004)  1--21.

\bibitem{rP66}
R.~J. Parikh, On context-free languages, {\em J. Assoc. Comput. Mach.} {\bf 13}
   (1966)  570--581.

\bibitem{RS97}
G.~Rozenberg and A.~Salomaa (eds.), {\em Handbook of formal languages. {V}ol.
  1} (Springer-Verlag, Berlin, 1997).

\bibitem{aS05b}
A.~Salomaa, Connections between subwords and certain matrix mappings, {\em
  Theoret. Comput. Sci.} {\bf 340}(2)  (2005)  188--203.

\bibitem{aS05a}
A.~Salomaa, On the injectivity of {P}arikh matrix mappings, {\em Fund. Inform.}
  {\bf 64}  (2005)  391--404.

\bibitem{aS06}
A.~Salomaa, Independence of certain quantities indicating subword occurrences,
  {\em Theoret. Comput. Sci.} {\bf 362}  (2006)  222--231.

\bibitem{aS08}
A.~Salomaa, Subword histories and associated matrices, {\em Theoret. Comput.
  Sci.} {\bf 407}  (2008)  250--257.

\bibitem{aS10}
A.~Salomaa, Criteria for the matrix equivalence of words, {\em Theoret. Comput.
  Sci.} {\bf 411}  (2010)  1818--1827.

\bibitem{vS09}
V.~N. {\c{S}}erb{\u{a}}nu{\c{t}}{\u{a}}, On {P}arikh matrices, ambiguity, and
  prints, {\em Internat. J. Found. Comput. Sci.} {\bf 20}(1)  (2009)  151--165.

\bibitem{SS06}
V.~N. {\c{S}}erb{\u{a}}nu{\c{t}}{\u{a}} and T.~F.
  {\c{S}}erb{\u{a}}nu{\c{t}}{\u{a}}, Injectivity of the {P}arikh matrix
  mappings revisited, {\em Fund. Inform.} {\bf 73}  (2006)  265--283.

\bibitem{wT14}
W.~C. Teh, On core words and the {P}arikh matrix mapping, {\em Internat. J.
  Found. Comput. Sci.} {\bf 26}(1)  (2015)  123--142.

\bibitem{wT15a}
W.~C. Teh, {P}arikh matrices and {P}arikh rewriting systems, {\em axXiv:1506.06476}.

\bibitem{TK14}
W.~C. Teh and K.~H. Kwa, Core words and {P}arikh matrices, {\em Theoret.
  Comput. Sci.} {\bf 582}  (2015)  60--69.




\end{thebibliography}


\end{document}